\title{Log Iitaka conjecture for abundant log canonical fibrations}
\author{Kenta Hashizume}
\date{2019/04/29, version 0.07}
\keywords{log Iitaka conjecture, log canonical pairs, abundant log canonical divisor}
\subjclass[2010]{primary: 14E30, secondary: 14D06, 14J40}
\address{Graduate School of Mathematical Sciences, 
The University of Tokyo, 3-8-1 Komaba Meguro-ku Tokyo 153-8914, Japan}
\email{hkenta@ms.u-tokyo.ac.jp}
\newtheorem{thm}{Theorem}[section]
\newtheorem{lem}[thm]{Lemma}
\newtheorem{cor}[thm]{Corollary}
\newtheorem{conj}[thm]{Conjecture}
\theoremstyle{definition}
\newtheorem{defn}[thm]{Definition}
\newtheorem{rem}[thm]{Remark}
\newtheorem*{ack}{Acknowledgments}
\newtheorem{step}{Step}
\newtheorem*{claim*}{Claim}
\begin{document}

\maketitle

\begin{abstract}
We prove that the log Iitaka conjecture holds for log canonical fibrations when log canonical divisor of a sufficiently general fiber is abundant.  
\end{abstract}

\tableofcontents 

\section{Introduction}
Throughout this article we will work over the complex number field. 

Canonical divisor of smooth projective varieties is an important object to study geometric properties of the varieties. 
In the birational geometry, we expect that we can classify all smooth projective varieties by using birational invariants defined with the canonical divisor.
Kodaira dimension is one of the birational invariants, and the Iitaka conjecture is a fundamental problem about Kodaira dimension. 
For any fibration of smooth projective varieties, the Iitaka conjecture asserts that the sum of the Kodaira dimension of base variety and that of the general fiber is not greater than the Kodaira dimension of total variety.
Kawamata \cite{kawamataiitakacurve} proved the conjecture when the base variety is a curve.
He also  proved the conjecture when the general fiber has a good minimal model  (\cite{kawamataiitakafiber}). 
Viehweg \cite{viehweg} proved the conjecture when the base variety is of general type, and Birkar \cite{birkariitaka} proved the conjecture when the dimension of total variety is less than or equal to $6$.  

In a viewpoint of the minimal model theory and a viewpoint of the Iitaka conjecture for morphisms between open varieties, it is natural to consider the logarithmic analogue of the Iitaka conjecture.

\begin{conj}[{\cite[Conjecture 1.2]{fujinoiitaka}}]\label{conjiitaka}
Let $f\colon X\to Y$ be a surjective morphism of smooth projective varieties with connected fibers. 
Let $\Delta_{X}$ (resp.~$\Delta_{Y}$) be a reduced simple normal crossing divisor on $X$ (resp.~$Y$) such that ${\rm Supp}f^{*}\Delta_{Y}\subset {\rm Supp}\Delta_{X}$. 
Let $F$ be a sufficiently general fiber of $f$ and we define $\Delta_{F}$ by equation $K_{F}+\Delta_{F}=(K_{X}+\Delta_{X})|_{F}$.
Then we have
$$\kappa(X,K_{X}+\Delta_{X})\geq \kappa(F,K_{F}+\Delta_{F})+\kappa(Y,K_{Y}+\Delta_{Y}).$$
\end{conj}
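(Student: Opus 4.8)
\medskip
\noindent\emph{Proof proposal.}
The plan is to use the abundance hypothesis to equip a general fiber with a good minimal model, then produce a relative good minimal model and, via the canonical bundle formula, reduce to the case of a fibration whose general fiber is of log general type; that case is then handled by a Viehweg-type weak positivity argument. Throughout I may freely replace $X$, $Y$, and the auxiliary varieties that appear by higher birational models (taking log resolutions and enlarging the reduced boundaries by the exceptional divisors); since all pairs in sight are log smooth, this changes none of the Kodaira dimensions $\kappa(X,K_X+\Delta_X)$, $\kappa(F,K_F+\Delta_F)$, $\kappa(Y,K_Y+\Delta_Y)$. If $\kappa(F,K_F+\Delta_F)=-\infty$ or $\kappa(Y,K_Y+\Delta_Y)=-\infty$ there is nothing to prove, so I assume both are nonnegative; then $K_F+\Delta_F$ is pseudo-effective and, by hypothesis, abundant, so the existence of good minimal models for log canonical pairs with abundant log canonical divisor gives a good minimal model of $(F,\Delta_F)$.

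Since a sufficiently general fiber of $f$ has a good minimal model, $(X,\Delta_X)$ has a good minimal model over $Y$: this is a relative version, going back to Lai, of the statement that a fibration whose general fiber has a good minimal model admits a relative good minimal model, here in log canonical generality. Let $(X',\Delta')$ be such a model, $f'\colon X'\to Y$ the induced morphism; then $K_{X'}+\Delta'$ is semiample over $Y$ and induces a contraction $g\colon X'\to Z$ over $Y$ with $K_{X'}+\Delta'\sim_{\mathbb Q}g^{*}A$ for some $\mathbb Q$-divisor $A$ on $Z$ ample over $Y$, so in particular $K_{X'}+\Delta'\sim_{\mathbb Q,\,Z}0$. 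Over the generic point of $Y$, $g$ is the Iitaka fibration of a good minimal model of $(F,\Delta_F)$, whence $\dim Z-\dim Y=\kappa(F,K_F+\Delta_F)$. The canonical bundle formula applied to $g\colon(X',\Delta')\to Z$ produces, after modifying $Z$, a generalized log canonical pair $(Z,B_Z+M_Z)$, with $B_Z\ge 0$ the discriminant part and $M_Z$ a nef moduli part, such that $K_{X'}+\Delta'\sim_{\mathbb Q}g^{*}(K_Z+B_Z+M_Z)$; hence
\[
\kappa(X,K_X+\Delta_X)=\kappa(X',K_{X'}+\Delta')=\kappa(Z,K_Z+B_Z+M_Z).
\]
Running the same construction along the Iitaka fibration of $(F,\Delta_F)$ shows moreover that $(K_Z+B_Z+M_Z)|_{Z_y}$ is big for a general fiber $Z_y$ of $Z\to Y$, i.e.\ $K_Z+B_Z+M_Z$ is big over $Y$.

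It remains to prove the Iitaka inequality
\[
\kappa(Z,K_Z+B_Z+M_Z)\;\ge\;(\dim Z-\dim Y)+\kappa(Y,K_Y+\Delta_Y)
\]
for the fibration $(Z,B_Z+M_Z)\to Y$ with general fiber of log general type; by the identifications above this is exactly the assertion of the theorem. This is the generalized-pair analogue of Viehweg's theorem on the Iitaka conjecture for fiber spaces with fibers of general type, and I would prove it by the weak positivity method: after a relative birational modification one shows that the sheaves $g_{*}\mathcal{O}_{Z}\!\bigl(m(K_{Z/Y}+B_Z)+\lceil mM_Z\rceil\bigr)$ are weakly positive for suitable divisible $m$; combining this with the bigness of $K_{Z/Y}+B_Z+M_Z$ over $Y$ accounts for the $\dim(Z/Y)=\kappa(F,K_F+\Delta_F)$ part, and Viehweg's covering and fiber-product trick then lets one tensor in $\mathcal{O}_Y(m(K_Y+\Delta_Y))$ and recover the $\kappa(Y,K_Y+\Delta_Y)$ part. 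The main obstacle is precisely this weak positivity statement for fibrations of generalized log canonical pairs: one must propagate the Hodge-theoretic semipositivity of the moduli $b$-divisor $M_Z$ through Viehweg's machinery while allowing the horizontal part of $B_Z$ to have coefficient one, and one must arrange all the birational modifications of $X'$, $Z$, $Y$ compatibly so that the displayed equalities of Kodaira dimensions and the identity $\dim Z-\dim Y=\kappa(F,K_F+\Delta_F)$ persist. The remaining ingredients, namely the reductions, the existence of the relative good minimal model, and the canonical bundle formula, are either routine or direct citations.
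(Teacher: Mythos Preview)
Your outline has the right shape---reduce via a relative minimal model and the canonical bundle formula to a fibration whose general fiber is of log general type, then invoke a Viehweg-type weak positivity argument---but two of the steps you call ``routine or direct citations'' are exactly where the difficulty lies. First, the existence of a good minimal model of $(X,\Delta_X)$ over $Y$ from the existence of one on the general fiber: the available lc version of this (Hacon--Xu, Theorem~\ref{thmmmp} here) requires every lc center of $(X,\Delta_X)$ to dominate the base. Since $\Delta_X$ is reduced and contains $\mathrm{Supp}\,f^*\Delta_Y$, there are vertical lc centers over $Y$, so the citation does not apply as stated. Second, and more seriously, you yourself flag the ``main obstacle'': after the canonical bundle formula you are left with a \emph{generalized} lc pair $(Z,B_Z+M_Z)$ over $Y$, and the Iitaka inequality (equivalently, weak positivity of the relevant pushforwards) for such pairs is not something you prove and is not a routine extension of \cite{kp}. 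A proposal that reduces the theorem to an unproved statement of comparable depth is not yet a proof.

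The paper avoids both problems by a single maneuver. Rather than running the MMP over $Y$, one first passes to the relative Iitaka fibration $\phi\colon X\to Z$ over $Y$ and then, using the log smooth model of Lemma~\ref{lemlogresol}, subtracts from $\Delta$ a small multiple of the part $\Delta_2$ that is vertical over $Z$; this does not change the Iitaka or numerical dimensions in play (Lemma~\ref{lemdim}) but forces every lc center to dominate $Z$. Now Theorem~\ref{thmmmp} applies over $Z$ (the fiber has $\kappa_\sigma=0$ by Lemma~\ref{lem--iitakafib}, hence a good minimal model by \cite{gongyo1}), and---crucially---the Fujino--Gongyo canonical bundle formula (Theorem~\ref{thmcbf}) then produces a genuine \emph{klt} pair $(X'',\Delta'')$ on a variety birational to $Z$, with no moduli part left over. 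One lands exactly in the big-fiber situation of \cite[Theorem~9.9]{kp}, and the final passage from the auxiliary divisor $M$ back to $K_Y+\Delta_Y$ is handled by the covering lemmas \cite[Lemmas~9.10--9.11]{kp}, which your sketch also omits. The key idea you are missing is this reduction to an honest klt pair via horizontality of lc centers over the relative Iitaka base, which lets one \emph{cite} \cite{kp} rather than extend it to generalized pairs.
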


Conjecture \ref{conjiitaka} holds true if we assume the minimal model theory for all lc pairs (see \cite[Theorem 1.3]{fujinoiitaka} and \cite{fujino-subadd-correct}). 
Currently, Conjecture \ref{conjiitaka} is known in the following cases:
\begin{itemize}
\item
$K_{F}+\Delta_{F}$ is big (\cite{kp}), 
\item
$K_{Y}+\Delta_{Y}$ is big (\cite{fujino-noteiitaka}), 
\item
$Y$ has maximal albanese dimension, $\Delta_{Y}=0$ and $K_{F}+\Delta_{F}$ is abundant (\cite{huiitaka}). 
\end{itemize}

In this note, we prove the following theorem. 

\begin{thm}\label{thmlogiitaka}
Conjecture \ref{conjiitaka} holds true when $K_{F}+\Delta_{F}$ is abundant. 
In particular, Conjecture \ref{conjiitaka} holds true when $(F,\Delta_{F})$ has a good minimal model. 
\end{thm}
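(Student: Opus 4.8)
The plan is to use the minimal model program and the canonical bundle formula to reduce the statement to the case, already known by \cite{kp}, in which the log canonical divisor of a general fiber is big. Write $\ell=\kappa(F,K_{F}+\Delta_{F})$. If $\ell=-\infty$, or if $\kappa(Y,K_{Y}+\Delta_{Y})=-\infty$, then the right-hand side of the desired inequality is $-\infty$ and there is nothing to prove; so I assume $\ell\geq 0$ and $\kappa(Y,K_{Y}+\Delta_{Y})\geq 0$ from now on. (The inequality we obtain will in particular show $\kappa(X,K_{X}+\Delta_{X})\geq 0$.)

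\emph{Step 1: pass to a relative good minimal model and its relative Iitaka fibration.} Since $K_{F}+\Delta_{F}$ is abundant, $(F,\Delta_{F})$ has a good minimal model; hence, by the minimal model program for log canonical pairs over a base --- which applies because a general fiber has a good minimal model --- $(X,\Delta_{X})$ has a good minimal model $g'\colon(X',\Delta')\to Y$ over $Y$. Here $K_{X'}+\Delta'$ is semiample over $Y$, $\kappa(X,K_{X}+\Delta_{X})=\kappa(X',K_{X'}+\Delta')$, the inclusion ${\rm Supp}\,{g'}^{*}\Delta_{Y}\subset{\rm Supp}\,\Delta'$ still holds, and a general fiber $(F',\Delta_{F'})$ of $g'$ is a good minimal model of $(F,\Delta_{F})$. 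Let $\phi\colon X'\to Z$ be the ample model of $K_{X'}+\Delta'$ over $Y$, with induced morphism $g\colon Z\to Y$. Then $\dim Z=\dim Y+\ell$, there is a $\mathbb{Q}$-divisor $A$ on $Z$ ample over $Y$ with $K_{X'}+\Delta'\sim_{\mathbb{Q}}\phi^{*}A$, and $(K_{X'}+\Delta')|_{G}\sim_{\mathbb{Q}}0$ on a general fiber $G$ of $\phi$.

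\emph{Step 2: canonical bundle formula.} Apply the canonical bundle formula to $\phi$. Its general fiber $(G,\Delta'|_{G})$ is log canonical with $K_{G}+\Delta'|_{G}\sim_{\mathbb{Q}}0$, so it is its own good minimal model, and hence the moduli $\mathbf{b}$-divisor of $\phi$ is $\mathbf{b}$-semiample; after replacing $Z$ (and $X'$) by suitable birational models I may therefore assume that the formula produces an effective $\mathbb{Q}$-divisor $\Delta_{Z}$ with $(Z,\Delta_{Z})$ log canonical and $K_{X'}+\Delta'\sim_{\mathbb{Q}}\phi^{*}(K_{Z}+\Delta_{Z})$. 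Since $\Delta'$ is reduced and contains ${\rm Supp}\,{g'}^{*}\Delta_{Y}$, every component of ${\rm Supp}\,g^{*}\Delta_{Y}$ appears in the discriminant part with coefficient $1$, so ${\rm Supp}\,g^{*}\Delta_{Y}\subset{\rm Supp}\,\Delta_{Z}$. Pushing $\phi^{*}(K_{Z}+\Delta_{Z})\sim_{\mathbb{Q}}\phi^{*}A$ down by $\phi$ gives $K_{Z}+\Delta_{Z}\sim_{\mathbb{Q}}A$, ample over $Y$, and, as $\phi$ is a contraction, $\kappa(Z,K_{Z}+\Delta_{Z})=\kappa(X',K_{X'}+\Delta')=\kappa(X,K_{X}+\Delta_{X})$. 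For general $y\in Y$, the pair $(Z_{y},\Delta_{Z}|_{Z_{y}})$ is log canonical, $K_{Z_{y}}+\Delta_{Z}|_{Z_{y}}=(K_{Z}+\Delta_{Z})|_{Z_{y}}$ is ample, hence big, and $\dim Z_{y}=\ell$, because $\phi|_{F'}$ is the Iitaka fibration of $K_{F'}+\Delta_{F'}$, whose Iitaka dimension is $\kappa(F,K_{F}+\Delta_{F})=\ell$. Thus $\kappa(Z_{y},K_{Z_{y}}+\Delta_{Z}|_{Z_{y}})=\ell$.

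\emph{Step 3: conclude.} Now apply \cite{kp} --- the case of Conjecture \ref{conjiitaka} in which the log canonical divisor of a general fiber is big --- to $g\colon(Z,\Delta_{Z})\to Y$, passing to a log resolution of $(Z,\Delta_{Z})$ if necessary, which does not change $\kappa(Z,K_{Z}+\Delta_{Z})$ since the pair is log canonical. This gives $\kappa(Z,K_{Z}+\Delta_{Z})\geq\kappa(Z_{y},K_{Z_{y}}+\Delta_{Z}|_{Z_{y}})+\kappa(Y,K_{Y}+\Delta_{Y})=\ell+\kappa(Y,K_{Y}+\Delta_{Y})$, which together with Step 2 is exactly $\kappa(X,K_{X}+\Delta_{X})\geq\kappa(F,K_{F}+\Delta_{F})+\kappa(Y,K_{Y}+\Delta_{Y})$. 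For the ``in particular'' assertion: if $(F,\Delta_{F})$ has a good minimal model then the log canonical divisor of that model is semiample, hence abundant, while passing to a minimal model changes neither the Kodaira nor the numerical dimension; so $K_{F}+\Delta_{F}$ is abundant and the first part applies. The main obstacles are the two inputs of Steps 1 and 2: the existence of the relative good minimal model over $Y$ under the abundance hypothesis (a genuine use of the minimal model program for log canonical pairs), and the $\mathbf{b}$-semiampleness of the moduli part needed to keep $(Z,\Delta_{Z})$ an honest log canonical pair --- once these are available, the bookkeeping of Iitaka dimensions and the appeal to \cite{kp} are routine.
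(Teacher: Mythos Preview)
Your argument has a genuine gap at the very first step. You claim that ``since $K_{F}+\Delta_{F}$ is abundant, $(F,\Delta_{F})$ has a good minimal model.'' This implication is not known for lc pairs: abundance means $\kappa=\kappa_{\sigma}$, and while the existence of a good minimal model implies abundance (your ``in particular'' direction is fine), the converse is open. The paper's statement deliberately separates the two hypotheses for exactly this reason. Consequently you cannot invoke the relative MMP over~$Y$ as you do. Even if $(F,\Delta_{F})$ did have a good minimal model, the Hacon--Xu theorem you implicitly use (Theorem~\ref{thmmmp} here) requires that \emph{all lc centers of $(X,\Delta_{X})$ dominate the base}; since $\Delta_{X}$ contains the reduced vertical divisor ${\rm Supp}\,f^{*}\Delta_{Y}$, this typically fails over~$Y$. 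The paper circumvents both problems by working over the relative Iitaka fibration $Z$ rather than over~$Y$: on the general fiber~$G$ of $X\to Z$ one has $\kappa_{\sigma}(G,K_{G}+\Delta_{G})=0$ (Lemma~\ref{lem--iitakafib}), so Gongyo \cite{gongyo1} supplies the good minimal model of the fiber, and a preliminary passage to a special log smooth model (Lemma~\ref{lemlogresol}) arranges that all lc centers dominate~$Z$, so that Hacon--Xu applies over~$Z$.

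There is a second structural issue in Step~3. After the canonical bundle formula you obtain an lc pair $(Z,\Delta_{Z})$, but to apply \cite{kp} in the form of Conjecture~\ref{conjiitaka} you would need $Z$ smooth and $\Delta_{Z}$ a reduced simple normal crossing divisor with ${\rm Supp}\,g^{*}\Delta_{Y}\subset{\rm Supp}\,\Delta_{Z}$; a log resolution does not preserve these conditions simultaneously, and $\Delta_{Z}$ produced by the canonical bundle formula is in general neither reduced nor simple normal crossing. The paper avoids this by proving instead the auxiliary inequality $\kappa(X,K_{X}+\Delta-f^{*}K_{Y}+f^{*}M)\geq\kappa(F,K_{F}+\Delta_{F})+\kappa(Y,M)$ (Theorem~\ref{thmmain}(1)), which only needs the weaker input \cite[Theorem 9.9]{kp} (Theorem~\ref{thmkp}) for an lc pair with big fiber, and then passes from this auxiliary inequality to Conjecture~\ref{conjiitaka} via the covering trick of \cite[Lemmas 9.10 and 9.11]{kp}. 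Your outline is morally close to the paper's Lemma~\ref{lemkltbig}, but it skips precisely the technical manoeuvres (Lemma~\ref{lemlogresol}, the reduction to klt via Theorem~\ref{thmcbf}, and the use of \cite[Lemmas 9.10--9.11]{kp}) that make the argument go through.
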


For abundant divisors, see Definition \ref{defnabundant}. 
Theorem \ref{thmlogiitaka} is a partial generalization of \cite[Corollary 1.2]{kawamataiitakafiber} to log pairs. 

As a corollary, we obtain the following statement.

\begin{cor}\label{coriitaka} 
Conjecture \ref{conjiitaka} holds true when 
${\rm dim}X-{\rm dim}Y\leq 3$ or
${\rm dim}X\leq5$. 
\end{cor}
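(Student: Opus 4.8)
The plan is to derive Corollary~\ref{coriitaka} from Theorem~\ref{thmlogiitaka} by a short case analysis, the only inputs beyond Theorem~\ref{thmlogiitaka} being the minimal model program and the abundance theorem for log canonical pairs in dimension at most three, together with the known base-curve case of Conjecture~\ref{conjiitaka}. Keep the notation of Conjecture~\ref{conjiitaka} and let $F$ be a sufficiently general fiber of $f$. Since $(X,\Delta_X)$ is log smooth and $F$ is general, $(F,\Delta_F)=(F,\Delta_X|_F)$ is again a log smooth (hence dlt) projective pair whose boundary $\Delta_F$ is an effective reduced simple normal crossing divisor.

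First I would treat the case $\dim X-\dim Y\le 3$, that is, $\dim F\le 3$. Then either $\kappa(F,K_F+\Delta_F)=-\infty$, in which case the right-hand side of the inequality in Conjecture~\ref{conjiitaka} is $-\infty$ and there is nothing to prove, or $K_F+\Delta_F$ is pseudo-effective and, by the minimal model program and the abundance theorem for log canonical pairs in dimension $\le 3$, the pair $(F,\Delta_F)$ has a good minimal model; in the latter case the second assertion of Theorem~\ref{thmlogiitaka} gives $\kappa(X,K_X+\Delta_X)\ge\kappa(F,K_F+\Delta_F)+\kappa(Y,K_Y+\Delta_Y)$. This disposes of the first hypothesis of the corollary.

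It remains to assume only $\dim X\le 5$. If $\dim Y\ge 2$ then $\dim X-\dim Y\le 3$ and we are reduced to the previous paragraph; if $\dim Y=0$ then $Y$ is a point, $X=F$ and $\Delta_X=\Delta_F$, and the inequality is an identity. The only remaining possibility is $\dim Y=1$, in which case $Y$ is a smooth projective curve and the assertion of Conjecture~\ref{conjiitaka} is precisely the subadditivity of logarithmic Kodaira dimension for fibrations over a curve, which is known (cf.\ \cite{kawamataiitakacurve} and its logarithmic refinements). As every $f$ with $\dim X-\dim Y\le 3$ or $\dim X\le 5$ is covered by one of these cases, the corollary follows. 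I do not anticipate a real obstacle here: once Theorem~\ref{thmlogiitaka} is available, the proof is just this case division together with the cited low-dimensional and base-curve inputs, and the only point requiring any care is the (routine) verification that a sufficiently general fiber of the log smooth pair $(X,\Delta_X)$ is again log smooth, so that the threefold abundance theorem indeed applies to $(F,\Delta_F)$.
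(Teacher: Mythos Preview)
Your treatment of the case $\dim X-\dim Y\le 3$ and the reduction to $\dim Y\le 1$ when $\dim X\le 5$ match the paper's argument and are fine. The genuine gap is your handling of the base-curve case. You dispose of $\dim Y=1$ by asserting that the log Iitaka inequality over a curve ``is known (cf.~\cite{kawamataiitakacurve} and its logarithmic refinements)'', but this is not a citable fact in the generality you need: Kawamata's theorem is for the boundary-free case, and no general log analogue for fibrations with $4$-dimensional fibers is available in the literature. Indeed, the paper does not list the base-curve case among the known instances of Conjecture~\ref{conjiitaka}, and the acknowledgment credits Fujino specifically for the argument used here.

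What the paper actually does when $\dim Y=1$ and $\dim F=4$ is a further case analysis that invokes several distinct inputs. First one reduces to $\kappa(F,K_F+\Delta_F)=0$: if $\kappa(F,K_F+\Delta_F)\ge 1$ then, by \cite[Lemma~3.8]{birkar1} and \cite[Theorem~3.3]{fujino-lcring}, the $4$-fold $(F,\Delta_F)$ has a good minimal model and Theorem~\ref{thmlogiitaka} applies. Next one reduces to $\kappa(Y,K_Y+\Delta_Y)=0$ via \cite[Theorem~1.9]{fujino-noteiitaka} (the big-base case). One is then left with showing $\kappa(X,K_X+\Delta_X)\ge 0$ when $Y$ is an elliptic curve with $\Delta_Y=0$, handled by \cite[Lemma~2.9]{wang}, or when $Y=\mathbb{P}^1$ with $\Delta_Y\sim -K_Y$, handled by an explicit cyclic base change that makes $f^*\Delta_Y$ reduced, followed by an appeal to the weak positivity theorem \cite[Theorem~1.1]{fujino-noteiitaka} for $f_*\mathcal{O}_X(m(K_X+\Delta_X))$. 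None of this is packaged in a single ``log $C_{n,1}$'' result you can cite, so your proof as written does not close the $\dim Y=1$ case.
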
 

A key ingredient to prove Theorem \ref{thmlogiitaka} is the following main result of this note, which is a generalization of \cite[Theorem 9.9]{kp}. 

\begin{thm}\label{thmmain}
Let $f\colon X\to Y$ be a surjective morphism from a normal projective variety to a smooth projective variety with connected fibers, and let $(X,\Delta)$ be an lc pair such that $\Delta$ is a $\mathbb{Q}$-divisor. 
We set $K_{F}+\Delta_{F}=(K_{X}+\Delta)|_{F}$, where $F$ is a sufficiently general fiber of $f$. 
Let $M$ be a $\mathbb{Q}$-Cartier divisor on $Y$. 
Suppose that 
\begin{itemize}
\item
$\kappa(Y,M)\geq0$, 
\item
$\kappa(F,K_{F}+\Delta_{F})\geq0$, and 
\item
$K_{F}+\Delta_{F}$ is abundant. 
\end{itemize}
Then, the followings hold true:
\begin{enumerate}
\item
The inequality $\kappa(X,K_{X}+\Delta-f^{*}K_{Y}+f^{*}M)\geq \kappa(F,K_{F}+\Delta_{F})+\kappa(Y,M)$ holds.  
\item 
Put $n={\rm dim}Y-\kappa(Y,M)$, and suppose in addition that all projective klt pairs of dimension $\leq n$ have good minimal models or Mori fiber spaces.  
If $M-K_{Y}$ is nef and abundant, then $K_{X}+\Delta-f^{*}K_{Y}+f^{*}M$ is abundant.
\end{enumerate} 
\end{thm}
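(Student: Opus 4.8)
The plan is to reduce both assertions to the case treated in \cite[Theorem 9.9]{kp}, in which the log canonical divisor of a general fibre is \emph{big}, by passing to the Iitaka fibration of $K_{F}+\Delta_{F}$ relative to $f$ and then applying the canonical bundle formula.

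I would begin with the standard reductions. Taking a log resolution $\mu\colon X'\to X$ and writing $K_{X'}+\Delta'=\mu^{*}(K_{X}+\Delta)+E'$, where $\Delta'$ is the boundary supported on the strict transform of $\Delta$ and the exceptional divisors and $E'\geq0$ is exceptional (using that $(X,\Delta)$ is lc), we may assume $X$ smooth and $(X,\Delta)$ log smooth; since $E'$ is effective and exceptional and $-f^{*}K_{Y}$ and $f^{*}M$ pull back, none of $\kappa$, $\nu$ or abundance of the divisors in question is affected, and $K_{F'}+\Delta_{F'}$ is still abundant with $\kappa\geq0$. After a further birational modification of $X$ over $Y$ (again updating $\Delta$ and keeping it a boundary), the relative Iitaka fibration of $K_{F}+\Delta_{F}$ produces a tower $X\xrightarrow{g}Z\xrightarrow{h}Y$ with $f=h\circ g$, where $g$ and $h$ are contractions and $g|_{F}$ is the Iitaka fibration of $K_{F}+\Delta_{F}$ for general $F$. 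Then $\dim Z-\dim Y=\kappa(F,K_{F}+\Delta_{F})$, and for a general fibre $X_{z}$ of $g$ the divisor $K_{X_{z}}+\Delta_{X_{z}}=(K_{F}+\Delta_{F})|_{X_{z}}$ has Iitaka dimension $0$; since $K_{F}+\Delta_{F}$ is abundant its numerical dimension is $0$ as well, so $K_{X_{z}}+\Delta_{X_{z}}\equiv0$, and an effective $\mathbb{Q}$-divisor that is numerically trivial on a projective variety is $0$, so $K_{X_{z}}+\Delta_{X_{z}}\sim_{\mathbb{Q}}0$. Thus $g\colon(X,\Delta)\to Z$ is an lc-trivial fibration.

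Now apply the canonical bundle formula for lc-trivial fibrations (Ambro, Fujino--Mori, Fujino--Gongyo): after replacing $Z$ and $X$ by higher models, there are a boundary $B_{Z}$ and a nef $\mathbb{Q}$-divisor $M_{Z}$ on $Z$ with $(Z,B_{Z},M_{Z})$ generalized lc and $K_{X}+\Delta\sim_{\mathbb{Q}}g^{*}(K_{Z}+B_{Z}+M_{Z})$. Since $f=h\circ g$ and $g$ is a contraction, $\kappa(X,g^{*}N)=\kappa(Z,N)$ and $\nu(X,g^{*}N)=\nu(Z,N)$ for any $\mathbb{Q}$-Cartier $N$ on $Z$; applying this with $N=K_{Z}+B_{Z}+M_{Z}-h^{*}K_{Y}+h^{*}M$ reduces both parts of the theorem to the fibration $h\colon Z\to Y$ and the divisor $N$. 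Restricting the canonical bundle formula to a general fibre $Z_{y}$ of $h$, compatibly (via $g|_{F}$) with the canonical bundle formula of $K_{F}+\Delta_{F}$, gives $\kappa(Z_{y},(K_{Z}+B_{Z}+M_{Z})|_{Z_{y}})=\kappa(F,K_{F}+\Delta_{F})=\dim Z_{y}$, so $K_{Z}+B_{Z}+M_{Z}$ is big on general fibres of $h$. Because $Y$ and $M$ are unchanged, the integer $n=\dim Y-\kappa(Y,M)$ and the extra hypotheses of (2) are unchanged, and the hypotheses of the big-fibre statement hold: $\kappa(Y,M)\geq0$, $\kappa(Z_{y},(K_{Z}+B_{Z}+M_{Z})|_{Z_{y}})\geq0$, and the fibre divisor is big, hence abundant. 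Therefore (1) and (2) follow from \cite[Theorem 9.9]{kp} applied to $h\colon Z\to Y$ and $(Z,B_{Z},M_{Z})$; the only point needing care is that the moduli part $M_{Z}$ is merely nef, so one applies the generalized-pair version of \cite[Theorem 9.9]{kp} (or disposes of $M_{Z}$ by a final birational modification). Pulling the conclusions back along $g$ gives (1) and (2), and since a good minimal model of $(F,\Delta_{F})$ forces $K_{F}+\Delta_{F}$ to be abundant, this also yields the last sentence of Theorem \ref{thmlogiitaka} and Corollary \ref{coriitaka}.

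I expect the main difficulty to lie in the previous two paragraphs and, within the big-fibre input, in part (2). For (1) the essential content of \cite[Theorem 9.9]{kp} is a Viehweg-type weak positivity / subadditivity argument adapted to the $f^{*}M$ twist, so the genuine obstacle there is organizational: running the canonical bundle formula in the strictly log canonical range while keeping the moduli part nef and its restriction to general fibres under control. For (2) the hard point is to bound the numerical dimension of $N$ by its Iitaka dimension: the part of $N$ coming from $K_{Z}+B_{Z}+M_{Z}-h^{*}K_{Y}$ is big over $Y$, hence of log general type over $Y$ and so admits a relative good minimal model over $Y$ unconditionally, but controlling the contribution of $h^{*}(M-K_{Y})$ forces one to promote the nef and abundant divisor $M-K_{Y}$ on $Y$ to a semiample one after restricting to the $n$-dimensional fibres of the Iitaka fibration of $M$ — and this is exactly where the assumed existence of good minimal models for klt pairs of dimension $\leq n$ enters.
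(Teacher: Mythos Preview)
Your overall strategy---pass to the relative Iitaka fibration $X\to Z\to Y$, apply a canonical bundle formula over $Z$, and reduce to the big-fibre case of \cite[Theorem 9.9]{kp}---is the same as the paper's. But two of the steps you treat as routine are exactly where the work lies, and as written they are genuine gaps.

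First, the implication ``$\kappa_{\sigma}(X_{z},K_{X_{z}}+\Delta_{X_{z}})=0\Rightarrow K_{X_{z}}+\Delta_{X_{z}}\equiv 0$'' is false in general: a pseudo-effective divisor with $\kappa_{\sigma}=0$ need not be numerically trivial (think of a Zariski decomposition $P+N$ with $P\equiv 0$ and $N\neq 0$). So you cannot conclude that $g\colon (X,\Delta)\to Z$ is an lc-trivial fibration directly. The paper obtains $K_{X}+\Delta\sim_{\mathbb{Q},Z}0$ only after (a) passing to a special log smooth model so that all lc centers of $(X,\Delta)$ dominate $Z$ (Lemma \ref{lemlogresol} plus a perturbation of the vertical reduced part of the boundary), and then (b) running a relative MMP over $Z$ using Gongyo's theorem for $\kappa_{\sigma}=0$ together with Hacon--Xu (Theorem \ref{thmmmp}) to reach a good minimal model over $Z$. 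Only at that point is the log canonical divisor semi-ample, hence $\mathbb{Q}$-trivial, on general fibres of $X\to Z$.

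Second, once you do have an lc-trivial fibration, the canonical bundle formula in general only yields a \emph{generalized} lc pair $(Z,B_{Z}+M_{Z})$ with nef moduli part, and there is no off-the-shelf generalized-pair version of \cite[Theorem 9.9]{kp} to invoke; the phrase ``or disposes of $M_{Z}$ by a final birational modification'' hides exactly the missing input. The paper avoids this entirely: because all lc centers were arranged to dominate $Z$, Fujino--Gongyo (Theorem \ref{thmcbf}) gives an honest \emph{klt} pair $(X'',\Delta'')$ with $K_{X}+\Delta\sim_{\mathbb{Q}}\psi^{*}(K_{X''}+\Delta'')$, so one lands squarely in the setting of Theorem \ref{thmkp}. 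For part (2) the paper then proceeds concretely: write $D=M-K_{Y}$ as the pullback of a nef and big divisor via \cite[Proposition 6-1-3]{kmm-mmp}, absorb it into a klt boundary $\Delta'$, take the Iitaka fibration of $K_{X}+\Delta+\Delta'$, and use the bound $\dim G\leq \dim Y-\kappa(Y,M)=n$ together with the assumed minimal model theory in dimension $\leq n$ to force $\kappa_{\sigma}=\kappa$ on $G$. Your final paragraph gestures at this but does not supply it.
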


To prove Theorem \ref{thmlogiitaka} we only need Theorem \ref{thmmain} (1). 
Theorem \ref{thmmain} (2) is a result of the generalized abundance for generalized lc pairs. 
For the proof of Theorem \ref{thmmain}, we use a result in \cite{kp} and apply argument as in \cite{huiitaka}. 
For details, see Section \ref{sec3}. 


The contents of this article are as follows: 
In Section \ref{sec2}, we collect definitions and recall known results on fibrations from an lc pair to a variety. 
In Section \ref{sec3}, we prove Theorem \ref{thmmain}, Theorem \ref{thmlogiitaka} and Corollary \ref{coriitaka}.

\begin{ack}
The author was partially supported by JSPS KAKENHI Grant Number JP16J05875. 
The author would like to express his gratitude to Dr. Sho Ejiri for stimulating discussion. 
The author is grateful to Professor Osamu Fujino for answering questions and giving comments. 
Especially, he kindly allows the author to use the argument in the proof of Corollary \ref{coriitaka}.  
The author thanks Dr. Takahiro Shibata for answering questions. 
\end{ack}

\section{Preliminaries}\label{sec2}

We will freely use the definitions of singularities of pairs as in \cite{kollar-mori}. 

Let $f\colon X\to Y$ be a projective morphism of normal varieties. 
We call $f$ a {\em contraction} if it is surjective and has connected fibers. 

Let $X$ be a normal projective variety and $D$ be a $\mathbb{Q}$-Cartier divisor on $X$. 
We denote the Iitaka dimension of $D$ by $\kappa(X,D)$. 

Next, we recall definition of the numerical dimension for $\mathbb{R}$-Cartier $\mathbb{R}$-divisors and abundant $\mathbb{Q}$-divisors. 

\begin{defn}[Numerical dimension]\label{defnnum}
Let $X$ be a normal projective variety and $D$ be an $\mathbb{R}$-Cartier $\mathbb{R}$-divisor on $X$. 
We define the {\em numerical dimension} of $D$, denoted by $\kappa_{\sigma}(X,D)$, as follows: 
For any Cartier divisor $A$ on $X$, we set
$$
\sigma(D;A)={\rm max}\left\{k\in \mathbb{Z}_{\geq0}\middle|\, \underset{m\to \infty}{\rm lim}{\rm sup}\frac{{\rm dim}H^{0}(X,\mathcal{O}_{X}(\llcorner mD \lrcorner+A))}{m^{k}}>0\right\}
$$
if ${\rm dim}H^{0}(X,\mathcal{O}_{X}(\llcorner mD \lrcorner+A))>0$ for infinitely many $m>0$ and otherwise we set $\sigma(D;A):=-\infty$. 
Then, we define 
$$\kappa_{\sigma}(X,D):={\rm max}\{\sigma(D;A)\,|\,A{\rm \; is\; Cartier}\}.$$
\end{defn}

Note that we have $\kappa(X,D)\leq \kappa_{\sigma}(X,D)$ for any $\mathbb{Q}$-Cartier divisor $D$. 

\begin{defn}[Abundant divisor]\label{defnabundant}
Let $X$ be a normal projective variety and $D$ be a $\mathbb{Q}$-Cartier divisor on $X$. 
We say $D$ is {\em abundant} when the equality $\kappa(X,D)=\kappa_{\sigma}(X,D)$ holds.
\end{defn}


The following lemma easily follows from definition. 

\begin{lem}
\label{lemdim}
Let $X$ be a normal projective variety and $D$ be a $\mathbb{Q}$-Cartier divisor on $X$. 
Suppose that there is an effective $\mathbb{Q}$-divisor $E$ such that $\kappa(X,D-E)\geq0$. 

Then $\kappa(X,D)=\kappa(X,D-tE)$ and $\kappa_{\sigma}(X,D)=\kappa_{\sigma}(X,D-tE)$ for any $t\in (0,1)$. 
\end{lem}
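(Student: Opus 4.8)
The plan is to derive the lemma directly from Definitions \ref{defnnum} and \ref{defnabundant}, using three elementary properties valid for arbitrary $\mathbb{Q}$-Cartier divisors on $X$: \emph{(i) monotonicity}, i.e. if $D_{1}\le D_{2}$ with $D_{2}-D_{1}$ effective then $\kappa(X,D_{1})\le\kappa(X,D_{2})$ and $\kappa_{\sigma}(X,D_{1})\le\kappa_{\sigma}(X,D_{2})$, since $\mathcal{O}_{X}(\llcorner mD_{1}\lrcorner+A)\subset\mathcal{O}_{X}(\llcorner mD_{2}\lrcorner+A)$ inside the constant sheaf for every integer $m>0$ and every Cartier $A$, so the spaces of global sections that enter the definitions only get larger; \emph{(ii) scaling invariance}, i.e. $\kappa(X,kD)=\kappa(X,D)$ and $\kappa_{\sigma}(X,kD)=\kappa_{\sigma}(X,D)$ for every positive integer $k$; and \emph{(iii) invariance} of both $\kappa$ and $\kappa_{\sigma}$ under $\mathbb{Q}$-linear equivalence. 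Note first that $D-tE$ is $\mathbb{Q}$-Cartier because $D$ and $D-E$ are (the latter since $\kappa(X,D-E)$ is assumed to make sense), hence so is $E$; thus all the quantities above are defined.

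Granting (i)--(iii), fix $t\in(0,1)$. Since $tE\ge0$ we have $D-tE\le D$, so (i) gives $\kappa(X,D-tE)\le\kappa(X,D)$ and $\kappa_{\sigma}(X,D-tE)\le\kappa_{\sigma}(X,D)$. For the reverse inequalities, use $\kappa(X,D-E)\ge0$ to pick an effective $\mathbb{Q}$-divisor $G$ with $G\sim_{\mathbb{Q}}D-E$; then $D\sim_{\mathbb{Q}}G+E$ and $D-tE\sim_{\mathbb{Q}}G+(1-t)E$. Choose a positive integer $k$ with $k(1-t)\ge1$. Then $k(D-tE)\sim_{\mathbb{Q}}kG+k(1-t)E$, and since $kG-G=(k-1)G\ge0$ and $\big(k(1-t)-1\big)E\ge0$, this divisor dominates $G+E\sim_{\mathbb{Q}}D$. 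Applying (iii), (i) and (ii) in turn yields $\kappa(X,D)\le\kappa\big(X,k(D-tE)\big)=\kappa(X,D-tE)$, and likewise $\kappa_{\sigma}(X,D)\le\kappa_{\sigma}(X,D-tE)$. Combining both directions gives $\kappa(X,D)=\kappa(X,D-tE)$ and $\kappa_{\sigma}(X,D)=\kappa_{\sigma}(X,D-tE)$.

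There is no real obstacle here; the statement is genuinely elementary, as the phrase preceding it suggests. The only point worth a line of checking is property (ii) (and the $\kappa_{\sigma}$ part of (iii)) for the numerical dimension as defined by the $\limsup$ formula in Definition \ref{defnnum}: one verifies that replacing $D$ by $kD$, or by a $\mathbb{Q}$-linearly equivalent divisor, does not change $\sigma(D;A)$ once the auxiliary Cartier divisor $A$ is adjusted accordingly, which is immediate from that definition. Everything else is formal bookkeeping with effectivity and $\mathbb{Q}$-linear equivalence.
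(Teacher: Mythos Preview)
Your argument is correct and is exactly the elementary verification the paper has in mind: the paper gives no proof beyond the remark that the lemma ``easily follows from definition,'' and your sandwich using monotonicity, $\mathbb{Q}$-linear equivalence invariance, and scaling invariance of $\kappa$ and $\kappa_{\sigma}$ is the intended unpacking of that remark.
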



The following lemma is used in this paper. 

\begin{lem}\label{lem--iitakafib}
Let $(X,\Delta)$ be a projective lc pair with a boundary $\mathbb{Q}$-divisor $\Delta$. 
Suppose that $K_{X}+\Delta$ is abundant and $\kappa(X,K_{X}+\Delta)\geq 0$. 
Let $X\dashrightarrow V$ be the Iitaka fibration of associated to $K_{X}+\Delta$. 
Pick a log resolution $f\colon Y\to X$ of $(X,\Delta)$ such that the induced map $Y\dashrightarrow V$ is a morphism, and let $(Y,\Gamma)$ be an lc pair such that $\Gamma$ is a $\mathbb{Q}$-divisor and we can write $K_{Y}+\Gamma=f^{*}(K_{X}+\Delta)+E$ for an effective $f$-exceptional divisor $E$. 

Then, we have $\kappa_{\sigma}(G,K_{G}+\Gamma_{G})=0$, where $G$ is a sufficiently general fiber of the morphism $Y\to V$ and $K_{G}+\Gamma_{G}=(K_{Y}+\Gamma)|_{G}$. 
\end{lem}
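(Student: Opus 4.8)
The plan is to reduce the statement to a comparison between the numerical dimension and the Iitaka dimension along the Iitaka fibration, using the standard properties of the Iitaka fibration together with abundance. Since $E$ is effective and $f$-exceptional, pushing forward sections gives $\kappa(Y,K_{Y}+\Gamma)=\kappa(X,K_{X}+\Delta)$ and, by a negativity-type argument for the numerical dimension (or by Lemma \ref{lemdim} applied suitably), also $\kappa_{\sigma}(Y,K_{Y}+\Gamma)=\kappa_{\sigma}(X,K_{X}+\Delta)$; hence $K_{Y}+\Gamma$ is again abundant. So I may replace $(X,\Delta)$ by $(Y,\Gamma)$ and assume the Iitaka fibration $g\colon Y\to V$ is an honest morphism. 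Let $\kappa:=\kappa(Y,K_{Y}+\Gamma)=\dim V\geq 0$.

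Next I would record the defining property of the Iitaka fibration: there is a big $\mathbb{Q}$-divisor $B$ on a birational modification of $V$ (we may assume $V$ itself after further blow-up, absorbed into the previous reduction) such that $g^{*}B$ and $K_{Y}+\Gamma$ have the same sections asymptotically, and in particular $\kappa\bigl(G,(K_{Y}+\Gamma)|_{G}\bigr)=\kappa(G,K_{G}+\Gamma_{G})=0$ for a sufficiently general fiber $G$ of $g$ — this is precisely the content of $G$ being a general fiber of the map defined by the semigroup of $K_{Y}+\Gamma$. So the Iitaka-dimension half, $\kappa(G,K_{G}+\Gamma_{G})=0$, is essentially automatic. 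The real task is to prove $\kappa_{\sigma}(G,K_{G}+\Gamma_{G})\le 0$; combined with $\kappa(G,K_{G}+\Gamma_{G})=0\le \kappa_{\sigma}(G,K_{G}+\Gamma_{G})$ this forces equality to $0$ and hence $K_{G}+\Gamma_{G}$ abundant with $\kappa_{\sigma}=0$.

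For the bound $\kappa_{\sigma}(G,K_{G}+\Gamma_{G})\le 0$ I would use the subadditivity (easy additivity) of the numerical dimension for the fibration $g\colon Y\to V$ applied to the abundant divisor $K_{Y}+\Gamma$: one has $\kappa_{\sigma}(Y,K_{Y}+\Gamma)\ge \kappa_{\sigma}(G,K_{G}+\Gamma_{G})+\kappa_{\sigma}(V,B)$ (the restriction-plus-pullback inequality for $\kappa_\sigma$, valid since $G$ is a very general fiber). Since $B$ is big on $V$, $\kappa_{\sigma}(V,B)=\dim V=\kappa$. On the other hand abundance gives $\kappa_{\sigma}(Y,K_{Y}+\Gamma)=\kappa(Y,K_{Y}+\Gamma)=\kappa$. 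Therefore $\kappa\ge \kappa_{\sigma}(G,K_{G}+\Gamma_{G})+\kappa$, i.e. $\kappa_{\sigma}(G,K_{G}+\Gamma_{G})\le 0$, which is what we wanted.

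The main obstacle I anticipate is justifying the two inequalities/equalities involving $\kappa_{\sigma}$ under birational pullback and under restriction to a general fiber: the behavior of $\kappa_{\sigma}$ is subtler than that of $\kappa$, and one must be careful that "sufficiently general fiber" in Definition \ref{defnnum}'s asymptotic sense is strong enough (restriction of the relevant $H^{0}$'s, no jumping of $\kappa_\sigma$ on a general fiber). I expect this to follow from known results on $\kappa_\sigma$ of fiber spaces (the additivity $\kappa_\sigma(Y,D)\ge \kappa_\sigma(G,D|_G)+\kappa_\sigma(V,B)$ when $D\sim_{\mathbb{Q}}g^*B+\text{(vertical)}$ and $B$ big), which I would cite; the rest of the argument is then a short formal deduction from the definition of abundant together with the defining property of the Iitaka fibration.
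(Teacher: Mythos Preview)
Your overall strategy coincides with the paper's: reduce to showing $\kappa_{\sigma}(G,K_{G}+\Gamma_{G})\le 0$ and deduce this from a superadditivity-type inequality for $\kappa_{\sigma}$ along the Iitaka fibration, combined with the abundance hypothesis $\kappa_{\sigma}(Y,K_{Y}+\Gamma)=\dim V$. But there is a genuine gap in the key step.

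The inequality you want to cite,
\[
\kappa_{\sigma}(Y,K_{Y}+\Gamma)\ \ge\ \kappa_{\sigma}(G,K_{G}+\Gamma_{G})+\kappa_{\sigma}(V,B),
\]
is not available in this form. First, $K_{Y}+\Gamma$ is \emph{not} $\mathbb{Q}$-linearly equivalent to $g^{*}B+(\text{vertical})$: the Iitaka fibration only gives $K_{Y}+\Gamma\sim_{\mathbb{Q}}\psi^{*}A+N$ with $A$ ample on $V$ and $N\ge 0$ effective, and $N$ typically has horizontal components. Second, the known superadditivity statements for $\kappa_{\sigma}$ (Nakayama \cite[V.4.1]{nakayama}, and in the precise form needed here \cite[Theorem 2.1, (3.3)]{fujino-subadd-correct}) apply to a \emph{log canonical} divisor on the total space and have $\kappa(\bar V,K_{\bar V}+\bar A)$, not $\kappa_{\sigma}(V,B)$, on the base side. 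The Remark following the Lemma in the paper is exactly warning that naive additivity for $\kappa_{\sigma}$ (e.g.\ via \cite{leh} or \cite[V.4.2]{nakayama}) fails in light of Lesieutre's example; one must use Fujino's version, whose hypotheses you do not presently meet.

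What the paper does to close this gap is a twisting/sandwich trick. From $K_{Y}+\Gamma\sim_{\mathbb{Q}}\psi^{*}A+N$ with $A$ ample and $N\ge 0$ one gets, for every $k>0$,
\[
K_{Y}+\Gamma\ \le\ K_{Y}+\Gamma+k\psi^{*}A\ \sim_{\mathbb{Q}}\ (1+k)\psi^{*}A+N\ \le\ (1+k)(K_{Y}+\Gamma),
\]
hence $\kappa_{\sigma}(Y,K_{Y}+\Gamma+k\psi^{*}A)=\kappa_{\sigma}(Y,K_{Y}+\Gamma)=\dim V$. Now choose $k\gg 0$ so that, after passing to a resolution $\bar V\to V$ and a general member $\bar A\sim kg^{*}A$, the pair $(\bar V,\bar A)$ is log smooth lc with $K_{\bar V}+\bar A$ big, and $(Y,\Gamma+\bar\psi^{*}\bar A)$ is log smooth lc. Then \cite[Theorem 2.1]{fujino-subadd-correct} applies to $K_{Y}+\Gamma+\bar\psi^{*}\bar A$ and yields
\[
\dim V=\kappa_{\sigma}(Y,K_{Y}+\Gamma+\bar\psi^{*}\bar A)\ \ge\ \kappa_{\sigma}(G,K_{G}+\Gamma_{G})+\kappa(\bar V,K_{\bar V}+\bar A)=\kappa_{\sigma}(G,K_{G}+\Gamma_{G})+\dim V,
\]
which gives $\kappa_{\sigma}(G,K_{G}+\Gamma_{G})\le 0$. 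The two missing ingredients in your proposal are thus (i) the sandwich argument showing the twist by $k\psi^{*}A$ does not change $\kappa_{\sigma}$, and (ii) invoking the correct superadditivity, which needs the twisted divisor and a base term of the form $K_{\bar V}+\bar A$ rather than an arbitrary big $B$.
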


\begin{proof}

It is clear that $\kappa_{\sigma}(G,K_{G}+\Gamma_{G})\geq0$. 
We prove $\kappa_{\sigma}(G,K_{G}+\Gamma_{G})\leq0$. 
Note that $\kappa_{\sigma}(Y,K_{Y}+\Gamma)={\rm dim}Z$ by hypothesis. 
We denote the morphism $Y\to Z$ by $\psi$. 
Since the map $X\dashrightarrow V$ is the Iitaka fibration associated to $K_{X}+\Delta$, by construction of $(Y,\Gamma)$ there is an ample $\mathbb{Q}$-Cartier divisor $A\geq0$ on $V$ and an effective $\mathbb{Q}$-divisor $N$ on $Y$ such that $f^{*}(K_{Y}+\Gamma)-\psi^{*}A\sim_{\mathbb{Q}}N$. 
Then 
$$K_{Y}+\Gamma\leq K_{Y}+\Gamma+k\psi^{*}A\sim_{\mathbb{Q}}(1+k)\psi^{*}A+N\leq (1+k)(\psi^{*}A+N)\sim_{\mathbb{Q}}(1+k)(K_{Y}+\Gamma)$$
 for any $k>0$. 
Therefore, we have 
$$\kappa_{\sigma}(Y,K_{Y}+\Gamma+k\psi^{*}A)=\kappa_{\sigma}(Y,K_{Y}+\Gamma)={\rm dim}V$$ for any $k>0$. 
Let $g\colon \bar{V}\to V$ be a resolution of $V$. 
By replacing $(Y,\Gamma)$ with a higher model, we may assume that the induced map $\bar{\psi}\colon Y\dashrightarrow \bar{V}$ is a morphism. 
Then the sufficiently general fiber of $\bar{\psi}$ is $G$. 
Fix an integer $k>0$ such that $K_{\bar{V}}+kg^{*}A$ is big and $kg^{*}A$ is a base point free Cartier divisor, and take $\bar{A}\sim kg^{*}A$ such that $(\bar{V}, \bar{A})$ and $(Y,\Gamma+\bar{\psi}^{*}\bar{A})$ are log smooth lc pairs. 
Then, by \cite[Theorem 2.1]{fujino-subadd-correct} (or (3.3) in \cite{fujino-subadd-correct}),
\begin{equation*}
\begin{split}
\kappa_{\sigma}(Y,K_{Y}+\Gamma+\bar{\psi}^{*}\bar{A})&\geq \kappa_{\sigma}(G,(K_{Y}+\Gamma+\bar{\psi}^{*}\bar{A})|_{G})+\kappa(\bar{V},K_{\bar{V}}+\bar{A} )\\
&=\kappa_{\sigma}(G,K_{G}+\Gamma_{G})+{\rm dim}\bar{V}
\end{split}
\end{equation*}
Since we have $\kappa_{\sigma}(Y,K_{Y}+\Gamma+k\psi^{*}A)=\kappa_{\sigma}(Y,K_{Y}+\Gamma)={\rm dim}V$ and $\bar{\psi}^{*}\bar{A}\sim k\psi^{*}A$, we have $\kappa_{\sigma}(G,K_{G}+\Gamma_{G})\leq0$. 
Therefore, we obtain $\kappa_{\sigma}(G,K_{G}+\Gamma_{G})=0$. 
\end{proof}

\begin{rem}
Quite recently, Lesieutre \cite{lesieutre} gave an example of an $\mathbb{R}$-divisor $D$ on a smooth projective variety $X$ such that $\kappa_{\sigma}(X,D)\neq \kappa_{\nu}(X,D)$, where $\kappa_{\nu}(X,\,\cdot\,)$ is an other notion of numerical dimension defined by numerical domination. 
So Lemma \ref{lem--iitakafib} does not follow from \cite[Theorem 6.1 (1)]{leh} or \cite[V, 4.2 Corollary]{nakayama}. 
Fortunately, argument by Nakayama \cite[Section V]{nakayama} works in the proof of Lemma \ref{lem--iitakafib}. 
For details, see \cite{fujino-subadd-correct}. 

Note that Lemma \ref{lem--iitakafib} holds true even if $\Delta$ is an $\mathbb{R}$-divisor. 
In fact, the $\mathbb{R}$-boundary divisor case of Lemma \ref{lem--iitakafib} follows from the argument of Shokurov polytopes and proof of Lemma \ref{lem--iitakafib}. 
\end{rem}

We introduce the notion of log smooth model. 

\begin{defn}[Log smooth model]\label{defnlogsmoothmodel}
Let $(X,\Delta)$ be an lc pair, and let  $f\colon W \to X$ be a log resolution of $(X,\Delta)$. 
Let $\Gamma$ be a boundary $\mathbb{R}$-divisor on $W$ such that $(W,\Gamma)$ is log smooth. 
Then $(W,\Gamma)$ is a {\it log smooth model} of $(X,\Delta)$ if we write 
$$K_{W}+\Gamma=f^{*}(K_{X}+\Delta)+E, $$
then
\begin{enumerate}
\item[$\bullet$]
$E$ is an effective $f$-exceptional divisor, and 
\item[$\bullet$] 
any $f$-exceptional prime divisor $E_{i}$ satisfying $a(E_{i},X,\Delta)>-1$ is a component of $E$ and $\Gamma-\llcorner \Gamma \lrcorner$.  
\end{enumerate}
\end{defn}

\begin{rem}\label{remlogsmooth}
When $\Delta$ is a $\mathbb{Q}$-divisor and $f\colon W \to X$ is a log resolution of $(X,\Delta)$, we can find a $\mathbb{Q}$-divisor $\Gamma$ on $W$ such that $(W,\Gamma)$ is a log smooth model of $(X,\Delta)$. 

Let $(X,\Delta)$ be an lc pair and $f\colon(W,\Gamma)\to (X,\Delta)$ be a log smooth model.  
Then we have $\kappa(X,K_{X}+\Delta)=\kappa(W,K_{W}+\Gamma)$ and $\kappa_{\sigma}(X,K_{X}+\Delta)=\kappa_{\sigma}(W,K_{W}+\Gamma)$
\end{rem}

In the proof of Theorem \ref{thmmain}, we use a special kind of log smooth model. 

\begin{lem}[{\cite[Lemma 2.10]{has-trivial}}]\label{lemlogresol}
Let $\pi \colon X \to Z$ be a projective morphism from a normal variety to a variety. 
Let $(X,\Delta)$ be an lc pair. 
Then there is a log smooth model $f\colon(W,\Gamma)\to(X,\Delta)$ such that 
\begin{enumerate}
\item[(i)]
$\Gamma=\Gamma'+\Gamma''$, where $\Gamma'\geq0$ and $\Gamma''$ is a reduced divisor or $\Gamma''=0$,
\item[(ii)]
$(\pi \circ f)({\rm Supp}\,\Gamma'') \subsetneq Z$, and 
\item[(iii)]
every lc center of $(W,\Gamma-t\Gamma'')$ dominates $Z$ for any $t\in (0,1]$.
\end{enumerate} 
\end{lem}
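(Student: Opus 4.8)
The plan is to build $(W,\Gamma)$ in two stages: first use a standard log resolution to control singularities and to separate out the part of the boundary that does not dominate $Z$, then perform a further blow-up to repair the lc centers that were created or moved by the first step. To begin, I would take a log resolution $f_0\colon W_0\to X$ of $(X,\Delta)$, and in fact a \emph{good} one in the sense that $\Supp(f_0^{-1}\Delta)\cup\Exc(f_0)$ is simple normal crossing and, moreover, the union of those components lying over the generic point of $Z$ is also simple normal crossing relative to the vertical ones. Write $K_{W_0}+\Gamma_0=f_0^*(K_X+\Delta)+E_0$ where $\Gamma_0$ is the $\mathbb{R}$-divisor obtained by taking strict transform of $\Delta$ plus all $f_0$-exceptional prime divisors $E_i$ with $a(E_i,X,\Delta)>-1$ with coefficient $1$ (so that $(W_0,\Gamma_0)$ is a log smooth model by Definition \ref{defnlogsmoothmodel}), and decompose $\Gamma_0=\Gamma'_0+\Gamma''_0$ by letting $\Gamma''_0$ be the reduced divisor whose support is the union of those components of $\lfloor\Gamma_0\rfloor$ that do \emph{not} dominate $Z$, and $\Gamma'_0=\Gamma_0-\Gamma''_0\ge 0$. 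This immediately gives (i) and (ii): $\Gamma'_0\ge0$, $\Gamma''_0$ is reduced (or $0$), and $(\pi\circ f_0)(\Supp\Gamma''_0)\subsetneq Z$ by construction.

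The subtle point is (iii): an lc center of $(W_0,\Gamma_0-t\Gamma''_0)$ is a stratum of $\lfloor\Gamma_0-t\Gamma''_0\rfloor=\lfloor\Gamma'_0\rfloor$ (for $t\in(0,1]$), i.e. an intersection of horizontal components of $\lfloor\Gamma_0\rfloor$ together with, possibly, $\Exc(f_0)$ strata; there is no reason such a stratum dominates $Z$. To fix this, I would run an argument of Hacon–Xu / Fujino type: consider the finitely many strata $S\subset W_0$ of $\lfloor\Gamma'_0\rfloor$ that do not dominate $Z$, and blow up along (the non-dominating) ones, replacing $W_0$ by a sequence of blow-ups $f_1\colon W\to W_0$ which are isomorphisms over the generic point of $Z$ and resolve so that, after moving the non-dominating strata into the exceptional locus, every remaining stratum of the new $\lfloor\Gamma'\rfloor$ dominates $Z$. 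Concretely, each blow-up with center a non-dominating stratum $S$ creates an exceptional divisor mapping onto $S$, hence not dominating $Z$; one puts that divisor into $\Gamma''$. Because discrepancies only go up under further blow-up of an snc pair, the log-smooth-model property is preserved (any exceptional divisor of $f=f_0\circ f_1$ with discrepancy $>-1$ over $(X,\Delta)$ is inserted into both $E$ and $\Gamma-\lfloor\Gamma\rfloor$, or into $\Gamma''$ if it is non-horizontal), and the new $\Gamma=\Gamma'+\Gamma''$ still satisfies (i) and (ii). After finitely many such blow-ups the process terminates because the maximal dimension of a non-dominating stratum of $\lfloor\Gamma'\rfloor$ strictly drops, and at the end every lc center of $(W,\Gamma-t\Gamma'')$ dominates $Z$, which is (iii).

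The main obstacle is making the second stage genuinely terminate while keeping the bookkeeping of coefficients consistent: each time one blows up a non-dominating stratum $S$ of $\lfloor\Gamma'\rfloor$, one must check that the new exceptional divisor $E_S$ can be assigned coefficient $1$ (so it enters $\Gamma''$) without destroying the equality $K_W+\Gamma=f^*(K_X+\Delta)+E$ with $E\ge0$ and $f$-exceptional — this is where one uses that $a(E_S,X,\Delta)\ge a(E_S,W_0,\Gamma_0)>-1$ (strict, since $S\subset\lfloor\Gamma'_0\rfloor$ has codimension $\ge2$ after the first resolution guarantees snc, wait — more carefully, $a(E_S,W_0,\Gamma_0)=\operatorname{codim}S-1-(\text{sum of coeffs of }\Gamma_0\text{ through }S)$, which may equal $-1$ exactly when $S$ is an lc center; in that case $E_S$ gets coefficient $1$ and its discrepancy over $(X,\Delta)$ is $\ge -1$, so it is legitimately part of $\Gamma''$, and the $E$-part picks up a nonnegative exceptional contribution) — and that simultaneously one has not created new \emph{horizontal} lc centers that fail to dominate $Z$. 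Since this is exactly the statement cited as \cite[Lemma 2.10]{has-trivial}, I would ultimately appeal to that reference, but the sketch above is the argument I would reconstruct: good log resolution, separate the vertical part of $\lfloor\Gamma\rfloor$ into $\Gamma''$, then repeatedly blow up non-dominating strata of $\lfloor\Gamma'\rfloor$ and absorb the new exceptional divisors into $\Gamma''$, using termination via the drop in dimension of the worst non-dominating stratum.
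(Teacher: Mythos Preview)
The paper itself gives no proof of this lemma; it is simply quoted from \cite[Lemma~2.10]{has-trivial}. So there is nothing in the present paper to compare your argument against, and your closing remark that you ``would ultimately appeal to that reference'' is in fact exactly what the author does.

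That said, your sketch has a concrete slip that conflicts with Definition~\ref{defnlogsmoothmodel}. You build $\Gamma_0$ by giving every $f_0$-exceptional prime divisor $E_i$ with $a(E_i,X,\Delta)>-1$ \emph{coefficient~$1$}. But the second bullet of Definition~\ref{defnlogsmoothmodel} requires each such $E_i$ to be a component of $\Gamma-\lfloor\Gamma\rfloor$, i.e.\ to appear in $\Gamma$ with a non-integer coefficient strictly between $0$ and $1$; with coefficient~$1$ it lies in $\lfloor\Gamma_0\rfloor$ and $(W_0,\Gamma_0)$ is \emph{not} a log smooth model. The same issue recurs in your second stage, where you propose to put any non-horizontal exceptional divisor $E_S$ into $\Gamma''$ (hence with coefficient~$1$): this is only permissible when $a(E_S,X,\Delta)=-1$. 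When $a(E_S,X,\Delta)>-1$ you are forced to give $E_S$ a coefficient in $(0,1)$, so it lands in $\Gamma'$ rather than $\Gamma''$. Fortunately this does not break condition~(iii): with coefficient $<1$ the divisor $E_S$ is not in $\lfloor\Gamma'\rfloor$ and therefore contributes no new lc centers of $(W,\Gamma-t\Gamma'')$. So your inductive blow-up scheme survives once the bookkeeping is corrected---the new exceptional divisor over a non-dominating stratum either has discrepancy $-1$ and goes into $\Gamma''$, or has discrepancy $>-1$, gets a fractional coefficient in $\Gamma'$, and is harmless for (iii)---but as written the coefficient assignment is inconsistent with the definition you invoke.
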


We introduce two results on the canonical bundle formula and the relative log MMP, which are proved by Fujino--Gongyo and Hacon--Xu, respectively. 

\begin{thm}[{\cite[Corollary 3.2]{fg-bundle}}]\label{thmcbf}
Let $\pi\colon X\to Z$ be a contraction of normal projective varieties, and let $(X,\Delta)$ be an lc pair such that $\Delta$ is a $\mathbb{Q}$-divisor. 
Suppose that $K_{X}+\Delta\sim_{\mathbb{Q}}\pi^{*}D$ for some $\mathbb{Q}$-divisor $D$ on $Z$ and all lc centers of $(X,\Delta)$ dominate $Z$. 

Then, there is a $\mathbb{Q}$-divisor $\Psi$ on $Z$ such that 
\begin{itemize}
\item
$(Z,\Psi)$ is klt, and
\item
$D\sim_{\mathbb{Q}}K_{Z}+\Psi$. 
\end{itemize}
\end{thm}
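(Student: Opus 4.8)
The plan is to deduce the statement from the canonical bundle formula for lc-trivial fibrations. First I would put the fibration in standard form: taking a common log resolution of $(X,\Delta)$ and of $\pi$ and replacing $(X,\Delta)$ by the induced crepant (sub-lc) pair, one may assume $X$ and $Z$ are smooth and that $\Delta$ and $\pi$ are in normal crossing position. This preserves $K_{X}+\Delta\sim_{\mathbb{Q}}\pi^{*}D$ and, since the set of log canonical centers is invariant under crepant birational maps, it preserves the hypothesis that every lc center of $(X,\Delta)$ dominates $Z$. Then I would run the canonical bundle formula: for a prime divisor $P$ on $Z$, let $t_{P}$ be the lc threshold of $\pi^{*}P$ with respect to $(X,\Delta)$ over the generic point $\eta_{P}$ of $P$, set the discriminant $B_{Z}=\sum_{P}(1-t_{P})P$, and define the moduli part $M_{Z}$ by $K_{X}+\Delta\sim_{\mathbb{Q}}\pi^{*}(K_{Z}+B_{Z}+M_{Z})$, so that $D\sim_{\mathbb{Q}}K_{Z}+B_{Z}+M_{Z}$.

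The next step is to check that $B_{Z}$ is an effective divisor with coefficients $<1$, which is exactly where the hypothesis on lc centers is used. No lc center of $(X,\Delta)$ can contain the generic point of a component of $\pi^{-1}(P)$, because such a component maps onto $P\subsetneq Z$ whereas every lc center dominates $Z$; hence $(X,\Delta)$ is klt in a neighbourhood of $\pi^{-1}(\eta_{P})$ for every prime divisor $P\subset Z$, so $t_{P}>0$ and every coefficient of $B_{Z}$ is $<1$. Conversely, a component of $\pi^{*}P$ that is a non-exceptional prime divisor on $X$ has log discrepancy $\leq 1$ with respect to $(X,\Delta)$ and multiplicity $\geq 1$ in $\pi^{*}P$, which forces $t_{P}\leq 1$, i.e.\ $B_{Z}\geq 0$. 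Thus $(Z,B_{Z})$ is a klt pair and $D\sim_{\mathbb{Q}}K_{Z}+B_{Z}+M_{Z}$.

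What remains, and this is the hard part, is to absorb the moduli part $M_{Z}$ into the boundary, i.e.\ to replace $B_{Z}+M_{Z}$ by a single effective $\Psi$ with $(Z,\Psi)$ klt and $K_{Z}+\Psi\sim_{\mathbb{Q}}D$. Here I would invoke the positivity of the moduli part proved in \cite{fg-bundle}: the moduli b-divisor $\mathbf{M}$ is b-nef, so on a suitable birational model $\mu\colon Z'\to Z$ it descends to a nef $\mathbb{Q}$-divisor $M_{Z'}$, with $K_{Z'}+B_{Z'}+M_{Z'}\sim_{\mathbb{Q}}\mu^{*}D$ and $(Z',B_{Z'})$ sub-klt. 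The delicate point is then to turn the nef class $M_{Z'}$ into an effective $\mathbb{Q}$-divisor that can be added to $B_{Z'}$ without destroying sub-klt-ness, and to push the result down to $Z$ via $\mu_{*}$; this descent, rather than the formal canonical bundle formula, is where the real content of \cite{fg-bundle} is spent, and it is the step I expect to be the main obstacle. Once an appropriate $\Psi'$ on $Z'$ with $K_{Z'}+\Psi'\sim_{\mathbb{Q}}\mu^{*}D$ is available, setting $\Psi=\mu_{*}\Psi'$ and using $\mu_{*}\mu^{*}D=D$ together with $B_{Z}\geq 0$ yields the desired klt pair $(Z,\Psi)$ with $D\sim_{\mathbb{Q}}K_{Z}+\Psi$.
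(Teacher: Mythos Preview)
The paper does not give a proof of this statement at all; it is quoted as \cite[Corollary 3.2]{fg-bundle} and used as a black box in Step~\ref{step4} of Lemma~\ref{lemkltbig}. So there is nothing in the paper to compare against, and your sketch is really an attempt to reconstruct the Fujino--Gongyo argument.

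Your architecture is the right one: set up the canonical bundle formula $K_X+\Delta\sim_{\mathbb{Q}}\pi^*(K_Z+B_Z+M_Z)$, use the hypothesis on lc centers to force the discriminant $B_Z$ to have coefficients in $[0,1)$, and then absorb the moduli part into an honest boundary. Two corrections are in order. First, the sentence ``$(X,\Delta)$ is klt in a neighbourhood of $\pi^{-1}(\eta_P)$'' is false in general: a horizontal lc center will meet $\pi^{-1}(\eta_P)$. What is true, and what you actually need for $t_P>0$, is only that no lc center is \emph{contained} in $\pi^{-1}(P)$, so that adding a small multiple of $\pi^*P$ keeps the pair lc over $\eta_P$.

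Second, and this is the real gap, b-nefness of $\mathbf{M}$ alone does not let you ``turn the nef class $M_{Z'}$ into an effective $\mathbb{Q}$-divisor'' while keeping $(Z',B_{Z'}+\text{effective})$ sub-klt: a nef divisor need not be $\mathbb{Q}$-effective, and even when it is, you have no control on the coefficients of a representative. What is actually used is Ambro's theorem that for \emph{klt}-trivial fibrations the moduli b-divisor is b-nef \emph{and good} (i.e.\ on a high model it is the pullback of a nef and big divisor), which does allow one to pick an effective representative with arbitrarily small coefficients. In the present situation the general fibre $(F,\Delta_F)$ may carry lc centers, so Ambro's hypothesis fails on the nose; the content of \cite[Corollary 3.2]{fg-bundle} is precisely the reduction of this lc-trivial situation, under the hypothesis that every lc center dominates $Z$, to Ambro's klt-trivial case. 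Your sketch defers exactly this step back to \cite{fg-bundle}, so as written it is circular: you are invoking the paper whose corollary you are trying to prove. The outline is not wrong, but it stops just before the point where the work begins.
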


\begin{thm}[cf.~{\cite[Theorem 1.1]{haconxu-lcc}}]\label{thmmmp}
Let $\pi\colon X\to Z$ be a projective contraction of normal quasi-projective varieties, and let $(X,\Delta)$ be an lc pair. 
Let $F$ be a general fiber of $\pi$, and we set $K_{F}+\Delta_{F}=(K_{X}+\Delta)|_{F}$. 
Suppose that $(F,\Delta_{F})$ has a good minimal model and all lc centers of $(X,\Delta)$ dominate $Z$. 

Then, there is a good minimal model of $(X,\Delta)$ over $Z$.  
\end{thm}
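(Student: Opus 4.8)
The plan is to reduce the statement to the case of a $\mathbb{Q}$-factorial dlt pair, where it is essentially \cite[Theorem 1.1]{haconxu-lcc}, and then to transport the resulting good minimal model back to $(X,\Delta)$.

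First I would take a dlt blow-up $g\colon(X',\Delta')\to(X,\Delta)$, so that $X'$ is $\mathbb{Q}$-factorial, $(X',\Delta')$ is dlt, $g$ is a projective birational morphism, and $K_{X'}+\Delta'=g^{*}(K_{X}+\Delta)$. The next task is to check that the two hypotheses of the theorem pass to $\pi\circ g\colon(X',\Delta')\to Z$. For the lc centers: every lc center of $(X',\Delta')$ is carried by $g$ onto an lc center of $(X,\Delta)$, the restriction of $g$ to it being surjective onto that center; since every lc center of $(X,\Delta)$ dominates $Z$, so does every lc center of $(X',\Delta')$. For the general fiber: for a sufficiently general fiber $F$ of $\pi$, the preimage $F':=g^{-1}(F)$ is a general fiber of $\pi\circ g$, and adjunction gives $K_{F'}+\Delta_{F'}=(g|_{F'})^{*}(K_{F}+\Delta_{F})$ with $g|_{F'}\colon F'\to F$ birational; thus $(F',\Delta_{F'})$ and $(F,\Delta_{F})$ are crepant birational, and since the existence of a good minimal model of an lc pair depends only on its crepant birational equivalence class, $(F',\Delta_{F'})$ has a good minimal model.

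With these in hand I would apply \cite[Theorem 1.1]{haconxu-lcc} to $\pi\circ g$ to obtain a good minimal model $(X'',\Delta'')$ of $(X',\Delta')$ over $Z$. Because $g$ is crepant, the same $(X'',\Delta'')$ is a good minimal model of $(X,\Delta)$ over $Z$: the induced map $X\dashrightarrow X''$ is a birational contraction, $K_{X''}+\Delta''$ is nef and semi-ample over $Z$, and the discrepancy inequality demanded of a minimal model follows by feeding the identity $K_{X'}+\Delta'=g^{*}(K_{X}+\Delta)$ into the corresponding inequality for $(X',\Delta')\dashrightarrow(X'',\Delta'')$.

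The real work sits inside \cite[Theorem 1.1]{haconxu-lcc}: one runs a $(K_{X'}+\Delta')$-MMP over $Z$ with scaling of an ample divisor and appeals to special termination near $\lfloor\Delta'\rfloor$, which is legitimate precisely because, by adjunction and induction on dimension, each divisorial lc center admits a good minimal model over $Z$ (using that all lc centers dominate $Z$); the minimal model so produced is then made good by the relative base-point-free theorem. On the side of the reduction carried out above, the one point that must be handled with care — since $g$ extracts divisors — is the invariance of the property "admits a good minimal model over $Z$" under the crepant birational map $g$, but this is by now a standard fact for lc pairs.
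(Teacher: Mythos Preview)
The paper does not supply its own proof of this statement; it records the theorem with a ``cf.'' citation to \cite[Theorem 1.1]{haconxu-lcc} and uses it as a black box. Your proposal therefore gives more detail than the paper itself: a reduction of the stated form (lc, not necessarily $\mathbb{Q}$-factorial) to the $\mathbb{Q}$-factorial dlt setting handled in \cite{haconxu-lcc}, via a dlt blow-up. That reduction is the standard one and is correct in outline.

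One sentence, however, overreaches. You assert that the induced map $X\dashrightarrow X''$ is a birational contraction. A dlt blow-up $g\colon X'\to X$ extracts divisors of log discrepancy $0$, and these need not be contracted by the subsequent $(K_{X'}+\Delta')$-MMP over $Z$; so $X\dashrightarrow X''$ may well extract divisors and fail to be a birational contraction in the strict sense required by the usual definition of a minimal model. What the construction actually produces is a good minimal model of $(X',\Delta')$ over $Z$, hence a \emph{weak} log canonical model of $(X,\Delta)$ over $Z$ with semi-ample log canonical class. From here one invokes the fact---which you do cite in your final paragraph---that for lc pairs the existence of a good minimal model over $Z$ is invariant under crepant birational maps (see, e.g., Birkar or \cite{has-trivial}). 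So the argument closes, but the claim that $X\dashrightarrow X''$ is automatically a birational contraction should be dropped and replaced by a direct appeal to that invariance statement.
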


We close this section with the following theorem proved by Kov\'acs and Patakfalvi, which plays a crucial role in the proof of Theorem \ref{thmmain}.

\begin{thm}[cf.~{\cite[Theorem 9.9]{kp}}]\label{thmkp}
Let $f\colon X\to Y$ be a contraction from a normal projective variety to a smooth projective variety, and let $(X,\Delta)$ be an lc pair such that $\Delta$ is a $\mathbb{Q}$-divisor. 
Suppose that the divisor $K_{F}+\Delta_{F}=(K_{X}+\Delta)|_{F}$ is big, where $F$ is a sufficiently general fiber of $f$. 
Let $M$ be a $\mathbb{Q}$-Cartier divisor on $Y$ such that $\kappa(Y,M)\geq 0$. 
Then we have
$$\kappa(X,K_{X}+\Delta-f^{*}K_{Y}+f^{*}M)\geq \kappa(F,K_{F}+\Delta_{F})+\kappa(Y,M).$$
\end{thm}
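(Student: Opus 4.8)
The plan is to bound $\kappa(X, K_{X}+\Delta-f^{*}K_{Y}+f^{*}M)$ by passing to the direct image sheaves on $Y$ and extracting positivity from the bigness of $K_{F}+\Delta_{F}$. Write $K_{X/Y}=K_{X}-f^{*}K_{Y}$ and $L=K_{X/Y}+\Delta+f^{*}M$. Since $F$ is a fiber, $(f^{*}K_{Y})|_{F}=(f^{*}M)|_{F}=0$, so $L|_{F}=K_{F}+\Delta_{F}$ and hence $\kappa(F,L|_{F})=\kappa(F,K_{F}+\Delta_{F})=\dim F$, the last equality because $K_{F}+\Delta_{F}$ is big. Arguing as in Remark \ref{remlogsmooth}, I replace $(X,\Delta)$ by a log smooth model: if $g\colon W\to X$ is a log resolution with $K_{W}+\Gamma=g^{*}(K_{X}+\Delta)+E$ and $E\geq 0$ exceptional, then the analogue of $L$ on $W$ equals $g^{*}L+E$, so $\kappa$ is unchanged and I may assume $X$ smooth and $\Delta$ of simple normal crossing support, making Hodge-theoretic positivity available. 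For $m$ sufficiently divisible, $mM$ is Cartier and $m(K_{X/Y}+\Delta)$ is integral, and the projection formula yields $H^{0}(X,mL)\cong H^{0}\bigl(Y,\mathcal{F}_{m}\otimes\mathcal{O}_{Y}(mM)\bigr)$, where $\mathcal{F}_{m}:=f_{*}\mathcal{O}_{X}(m(K_{X/Y}+\Delta))$. The task is thus to bound the growth of $h^{0}(Y,\mathcal{F}_{m}\otimes\mathcal{O}_{Y}(mM))$ from below by $m^{\dim F+\kappa(Y,M)}$.

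The crucial input is the weak positivity of the sheaves $\mathcal{F}_{m}$. By the positivity theorem for pushforwards of relative log canonical sheaves of lc pairs (Viehweg's weak positivity in the form extended to the lc setting by Fujino and by Kov\'acs--Patakfalvi), each $\mathcal{F}_{m}$ is weakly positive over the open locus of $Y$ over which $f$ is well behaved. Because $K_{F}+\Delta_{F}$ is big, the rank of $\mathcal{F}_{m}$ equals $h^{0}(F,m(K_{F}+\Delta_{F}))$, which grows like $m^{\dim F}$, and the fiberwise multiplication maps $S^{k}\mathcal{F}_{m_{0}}\to\mathcal{F}_{km_{0}}$ have image of generic rank $\sim k^{\dim F}$, reflecting the section ring of the big divisor $K_{F}+\Delta_{F}$. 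Combining this with weak positivity, and using Viehweg's fiber-product construction to upgrade weak positivity of the $\mathcal{F}_{m}$ into bigness of $\mathcal{F}_{m_{0}}$ for a suitable $m_{0}$, I obtain that $\mathcal{F}_{m_{0}}$ is a big sheaf on $Y$.

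It remains to feed in $\kappa(Y,M)\geq 0$ and recover the full exponent $\dim F+\kappa(Y,M)$. The idea is to produce sections of $mL$ as products of fiber pluricanonical sections with pullbacks of sections of multiples of $M$: the $\sim m^{\dim F}$ sections along the fibers are made to extend over the base by weak positivity of $\mathcal{F}_{m}$, while $\kappa(Y,M)\geq 0$ supplies $\gtrsim m^{\kappa(Y,M)}$ sections of $\mathcal{O}_{Y}(mM)$. Since the base sections are pulled back from $Y$, they are linearly independent transverse to the fiber directions, so the products span a subspace of dimension $\gtrsim m^{\dim F}\cdot m^{\kappa(Y,M)}$ inside $H^{0}(X,mL)$. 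This gives $\kappa(X,L)\geq \dim F+\kappa(Y,M)=\kappa(F,K_{F}+\Delta_{F})+\kappa(Y,M)$, as required.

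The main obstacle is the positivity input: establishing weak positivity of $\mathcal{F}_{m}$ for a merely lc (rather than klt) pair $(X,\Delta)$, and converting weak positivity together with bigness of the generic fiber into bigness of $\mathcal{F}_{m_{0}}$ compatibly with the twist by $M$. This is precisely the technical heart of \cite[Theorem 9.9]{kp}; the log canonical case genuinely requires the full strength of their positivity results, since the naive variation-of-Hodge-structure arguments degenerate when lc centers fail to dominate $Y$. The section-counting of the final paragraph is then a standard, if delicate, linear-independence argument.
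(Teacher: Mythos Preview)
Your reduction to the log smooth case via a log smooth model is exactly what the paper does, and this is the entire content of the paper's proof: once $(X,\Delta)$ is log smooth, the statement is literally \cite[Theorem 9.9]{kp}, cited as a black box. Everything after your first paragraph is therefore an attempt to re-derive the cited result rather than to prove the theorem as stated in this paper.

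As a sketch of \cite[Theorem 9.9]{kp} your outline captures the right ingredients (weak positivity of $f_{*}\mathcal{O}_{X}(m(K_{X/Y}+\Delta))$ in the lc setting, bigness of $K_{F}+\Delta_{F}$, twisting by $M$), but the final section-counting paragraph is not a proof as written. Weak positivity of $\mathcal{F}_{m}$ does not directly say that the $\sim m^{\dim F}$ fiber sections extend to global sections of $\mathcal{F}_{m}$ that one can then multiply by pullbacks of sections of $mM$; the passage from weak positivity to the desired growth of $h^{0}(Y,\mathcal{F}_{m}\otimes\mathcal{O}_{Y}(mM))$ is exactly the nontrivial step, and your ``products span a subspace of dimension $\gtrsim m^{\dim F}\cdot m^{\kappa(Y,M)}$'' assumes what has to be shown. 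You acknowledge this yourself in the last paragraph. For the purposes of this paper, the correct proof is simply: pass to a log smooth model and cite \cite[Theorem 9.9]{kp}.
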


\begin{proof}
By taking a log smooth model of $(X,\Delta)$, we may assume that $(X,\Delta)$ is log smooth. 
Then the theorem is nothing but \cite[Theorem 9.9]{kp}. 
\end{proof}

\section{Proof of Theorem \ref{thmmain}, Theorem \ref{thmlogiitaka} and Corollary \ref{coriitaka}}\label{sec3}

In this section, we prove Theorem \ref{thmmain}, Theorem \ref{thmlogiitaka} and Corollary \ref{coriitaka}. 

The following lemma is important for the proof of Theorem \ref{thmmain}. 

\begin{lem}\label{lemkltbig}
To prove Theorem \ref{thmmain}, we can assume that 
$(X,\Delta)$ is klt and 
$K_{F}+\Delta_{F}$ is big. 
\end{lem}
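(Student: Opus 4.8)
The plan is to reduce the general abundant case to the klt big case in two stages. First I would run the Iitaka fibration associated to $K_F+\Delta_F$ on a sufficiently general fiber, but do this \emph{family-wise} over $Y$ so that it globalizes. Concretely, take a log smooth model $h\colon(W,\Gamma)\to(X,\Delta)$ as in Lemma \ref{lemlogresol} with respect to $W\to Y$, so that $\Gamma=\Gamma'+\Gamma''$, the image of $\Supp\Gamma''$ is a proper subset of $Y$, and every lc center of $(W,\Gamma-t\Gamma'')$ dominates $Y$ for $t\in(0,1]$; by Remark \ref{remlogsmooth} this changes neither side of the desired inequality (the Iitaka dimensions of $K_X+\Delta-f^*K_Y+f^*M$ and of $K_F+\Delta_F$ are preserved, and $\kappa(Y,M)$ is untouched), and it preserves abundance of $K_F+\Delta_F$ and the sign conditions. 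After slightly perturbing $t$ using Lemma \ref{lemdim} if necessary, I may therefore assume $(X,\Delta)$ is log smooth and, shrinking to the general fiber, that $(F,\Delta_F)$ is log smooth with $\kappa(F,K_F+\Delta_F)\geq0$ and $K_F+\Delta_F$ abundant.

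The main step is the relative Iitaka fibration. Since $\kappa(F,K_F+\Delta_F)\geq0$ and $K_F+\Delta_F$ is abundant on a log smooth lc pair, I want a diagram
\begin{equation*}
\begin{CD}
X @>{g}>> T\\
@V{f}VV @VV{h}V\\
Y @= Y
\end{CD}
\end{equation*}
where $g\colon X\to T$ is (a model of) the Iitaka fibration of $K_X+\Delta$ over $Y$, $h\colon T\to Y$ is a contraction, the general fiber $G$ of $g$ satisfies $\kappa_\sigma(G,(K_X+\Delta)|_G)=0$ by Lemma \ref{lem--iitakafib} applied on the general fiber of $f$, hence $(K_X+\Delta)|_G\sim_{\mathbb Q}0$ (an abundant divisor of numerical dimension $0$ with $\kappa\ge 0$ is $\mathbb Q$-trivial), so that all lc centers of the relevant pair dominate $T$. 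Then Theorem \ref{thmcbf} produces a klt pair $(T,\Psi)$ with $K_T+\Psi\sim_{\mathbb Q}$ the pushdown, and on a general fiber $F_T$ of $h$ the divisor $K_{F_T}+\Psi_{F_T}$ is big (its Iitaka dimension equals $\dim F_T$, being the Iitaka fibration restricted to the general fiber of $f$). Finally, since $\kappa(X,K_X+\Delta-f^*K_Y+f^*M)$, $\kappa(F,K_F+\Delta_F)$ are read off from the corresponding quantities on $T$ via $g$ (Iitaka dimension is invariant under the Iitaka fibration, and one checks $\kappa(X,K_X+\Delta-f^*K_Y+f^*M)=\kappa(T,K_T+\Psi-h^*K_Y+h^*M)$ using the canonical bundle formula plus the fact that the difference $K_X+\Delta-g^*(K_T+\Psi)$ is $\mathbb Q$-linearly equivalent to an effective $g$-exceptional divisor), proving Theorem \ref{thmmain} for $(X,\Delta)\to Y$ follows from proving it for $(T,\Psi)\to Y$, which is klt with big log canonical divisor on the general fiber.

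The step I expect to be the main obstacle is constructing the relative Iitaka fibration $g\colon X\to T$ so that \emph{simultaneously} it restricts to the (absolute) Iitaka fibration on the general fiber $F$ of $f$, the target $T$ maps to $Y$, and $(K_X+\Delta)|_G\sim_{\mathbb Q}0$ on the general fiber $G$ of $g$ with all lc centers dominating $T$ — the last point is exactly what is needed to invoke Theorem \ref{thmcbf}, and it is where Lemma \ref{lem--iitakafib} and the Lemma \ref{lemlogresol}-type choice of log smooth model earn their keep. Once $g$ is in place, the transfer of Iitaka dimensions and of abundance along $g$ and the canonical bundle formula are routine, and the reduction to "$(X,\Delta)$ klt, $K_F+\Delta_F$ big" is complete; the klt-ness of $(T,\Psi)$ is automatic from Theorem \ref{thmcbf}.
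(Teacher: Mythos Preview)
Your overall strategy matches the paper's, but there are two genuine gaps.

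First, the assertion ``an abundant divisor of numerical dimension $0$ with $\kappa\ge 0$ is $\mathbb{Q}$-trivial'' is false as stated: the exceptional divisor of a blow-up is an effective divisor with $\kappa=\kappa_\sigma=0$ that is not $\mathbb{Q}$-trivial. What Lemma \ref{lem--iitakafib} actually gives you is $\kappa_\sigma(G,K_G+\Delta_G)=0$ for the general fiber $G$ of the relative Iitaka fibration $\phi\colon X\to Z$; to turn this into $(K_X+\Delta)|_G\sim_{\mathbb{Q}}0$ you need Gongyo's theorem that such $(G,\Delta_G)$ has a good minimal model, and then you must run a relative log MMP over $Z$ (Theorem \ref{thmmmp}, which itself requires the lc centers to dominate $Z$) to reach a model on which $K_X+\Delta$ is semi-ample over $Z$. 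Only after contracting via this semi-ample divisor do you get a morphism $\psi\colon X\to X''$ with $K_X+\Delta\sim_{\mathbb{Q}}\psi^*D$, which is the hypothesis Theorem \ref{thmcbf} needs. Without this MMP step there is no way to invoke the canonical bundle formula, and your claimed equality $\kappa(X,K_X+\Delta-f^*K_Y+f^*M)=\kappa(T,K_T+\Psi-h^*K_Y+h^*M)$ has no justification.

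Second, you apply Lemma \ref{lemlogresol} to $W\to Y$, but the paper applies it to $X\to Z$, the target of the relative Iitaka fibration. Your choice ensures that lc centers of $(W,\Gamma-t\Gamma'')$ dominate $Y$, not $Z$; yet both Theorem \ref{thmmmp} and Theorem \ref{thmcbf} require the lc centers to dominate $Z$. The paper's order of operations is therefore: first construct the relative Iitaka fibration $\phi\colon X\dashrightarrow Z$, then take the log smooth model of Lemma \ref{lemlogresol} relative to $\phi$, then subtract $t\Delta_2$ (with a separate argument, using Lemma \ref{lemdim} and the fact that $\Delta_2|_G=0$, to show this does not disturb the invariants appearing in either assertion of Theorem \ref{thmmain}), then run the MMP over $Z$, then contract and apply the canonical bundle formula. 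With these two corrections your outline becomes exactly the paper's proof.
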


\begin{proof}
The argument is very similar to \cite[Proof of Theorem 3.6]{huiitaka}. 
Let $f \colon X\to Y$, $\Delta$, $F$, and $K_{F}+\Delta_{F}$ be as in Theorem \ref{thmmain}. 
We prove Lemma \ref{lemkltbig} with several steps. 

\begin{step}\label{step1}
By taking a log smooth model of $(X,\Delta)$, we may assume that $(X,\Delta)$ is log smooth (see Remark \ref{remlogsmooth}). 
We take the relative Iitaka fibration of $K_{X}+\Delta$ over $Y$, which we denote $\phi \colon X\dashrightarrow Z$. 
By replacing $(X,\Delta)$ with a higher model, we can assume that $\phi$ is a morphism. 
Moreover, replacing $(X,\Delta)$ by a log smooth model as in Lemma \ref{lemlogresol}, we can assume that 
\begin{itemize}
\item[(i)]
$\Delta=\Delta_{1}+\Delta_{2}$, where $\Delta_{1}\geq0$ and $\Delta_{2}$ is a reduced divisor or $\Delta_{2}=0$,
\item[(ii)]
$\phi({\rm Supp}\,\Delta_{2}) \subsetneq Z$, and 
\item[(iii)]
any lc center of $(X,\Delta-t\Delta_{2})$ dominates $Z$ for any $t\in (0,1]$.
\end{itemize}
Note that the restriction of $\phi$ to $F$ is the Iitaka fibration of $K_{F}+\Delta_{F}$.  
Let $G$ be a sufficiently general fiber of $\phi$, and we set $K_{G}+\Delta_{G}=(K_{X}+\Delta)|_{G}$.
Since $K_{F}+\Delta_{F}$ is abundant, by Lemma \ref{lem--iitakafib}, we have $\kappa_{\sigma}(G,K_{G}+\Delta_{G})=0$. 
\end{step}

\begin{step}\label{step2}
In this step, we show that we can assume all lc centers of $(X,\Delta)$ dominate $Z$. 

By construction of the relative Iitaka fibration, there is a $\mathbb{Q}$-divisor $A$ on $Z$, which is ample over $Y$, such that $K_{F}+\Delta_{F}-(\phi^{*}A)|_{F}$ is $\mathbb{Q}$-linearly equivalent to an effective divisor and 
$\kappa(F,(\phi^{*}A)|_{F})=\kappa(F, K_{F}+\Delta_{F})$. 
Since $\phi({\rm Supp}\,\Delta_{2}) \subsetneq Z$ by (ii) in Step \ref{step1}, we can find a sufficiently small positive rational number $t$ such that $(\tfrac{1}{2}\phi^{*}A-t\Delta_{2})|_{F}$ is $\mathbb{Q}$-linearly equivalent to an effective divisor.  
Then, we have
\begin{equation*}
\begin{split}
\kappa(F,K_{F}+\Delta_{F})&\geq\kappa(F,(\phi^{*}A)|_{F})\geq 
\kappa\bigl(F, (\phi^{*}A-t\Delta_{2})|_{F}\bigr)\\
&=\kappa\bigl(F, \tfrac{1}{2}(\phi^{*}A)|_{F}+(\tfrac{1}{2}\phi^{*}A-t\Delta_{2})|_{F}\bigr)\\
&\geq\kappa\bigl(F, \tfrac{1}{2}(\phi^{*}A)|_{F}\bigr)\\
&=\kappa(F, K_{F}+\Delta_{F}). 
\end{split}
\end{equation*}
So $\kappa(F, K_{F}+\Delta_{F}-t\Delta_{2}|_{F})=\kappa(F, K_{F}+\Delta_{F})$. 

First, suppose that Theorem \ref{thmmain} (1) holds for the morphism $(X,\Delta-t\Delta_{2})\to Y$. 
Then 
\begin{equation*}
\begin{split}
\kappa(X,K_{X}+\Delta-f^{*}K_{Y}+f^{*}M)&\geq\kappa(X,K_{X}+\Delta-t\Delta_{2}-f^{*}K_{Y}+f^{*}M)\\&\geq \kappa(F, K_{F}+\Delta_{F}-t\Delta_{2}|_{F})+\kappa(Y,M)\\
&= \kappa(F, K_{F}+\Delta_{F})+\kappa(Y,M). 
\end{split}
\end{equation*}
Therefore, we see that Theorem \ref{thmmain} (1) holds for the morphism $f \colon (X,\Delta)\to Y$. 

Next, suppose that Theorem \ref{thmmain} (1) holds for the morphism $(X,\Delta-t\Delta_{2})\to Y$ and Theorem \ref{thmmain} (2) holds for the morphism $(X,\Delta-\tfrac{1}{2}t\Delta_{2})\to Y$. 
By the above calculation, we obtain 
$$\kappa(X,K_{X}+\Delta-t\Delta_{2}-f^{*}K_{Y}+f^{*}M)\geq \kappa(F, K_{F}+\Delta_{F})+\kappa(Y,M)\geq0.$$
Then, by Lemma \ref{lemdim} and Theorem \ref{thmmain} (2) for $(X,\Delta-\tfrac{1}{2}t\Delta_{2})\to Y$, we have 
\begin{equation*}
\begin{split}
&\kappa(X,K_{X}+\Delta-f^{*}K_{Y}+f^{*}M)\\
=&\kappa(X,K_{X}+\Delta-\tfrac{1}{2}t\Delta_{2}-f^{*}K_{Y}+f^{*}M)
=\kappa_{\sigma}(X,K_{X}+\Delta-\tfrac{1}{2}t\Delta_{2}-f^{*}K_{Y}+f^{*}M)\\
=&\kappa_{\sigma}(X,K_{X}+\Delta-f^{*}K_{Y}+f^{*}M).
\end{split}
\end{equation*}
Therefore, $K_{X}+\Delta-f^{*}K_{Y}+f^{*}M$ is abundant. 

In this way, to prove Theorem \ref{thmmain} for the morphism $f \colon (X,\Delta)\to Y$, it is sufficient to prove Theorem \ref{thmmain} (1) for the morphism $(X,\Delta-t\Delta_{2})\to Y$ and Theorem \ref{thmmain} (2) for the morphism $(X,\Delta-\tfrac{1}{2}t\Delta_{2})\to Y$. 
By (iii) in Step \ref{step1}, all lc centers of $(X,\Delta-t\Delta_{2})$ and those of $(X,\Delta-\tfrac{1}{2}t\Delta_{2})$ dominate $Z$. 
Furthermore, by (ii) in Step \ref{step1}, we have $\Delta_{2}|_{G}=0$. 
Since  $\kappa_{\sigma}(G,K_{G}+\Delta_{G})=0$, we have 
$$\kappa_{\sigma}(G,K_{G}+\Delta_{G}-t\Delta_{2}|_{G})=\kappa_{\sigma}(G,K_{G}+\Delta_{G}-\tfrac{1}{2}t\Delta_{2}|_{G})=0.$$ 
Therefore, in any case, by replacing $\Delta$ with $\Delta-t\Delta_{2}$ or $\Delta-\tfrac{1}{2}t\Delta_{2}$, we may assume that any lc center of $(X,\Delta)$ dominates $Z$. 
\end{step}

\begin{step}\label{step3}
From this step, we do not use the fact that $(X,\Delta)$ is log smooth. 
Now we have the following diagram
$$ \xymatrix@R=12pt 
{ (X,\Delta) \ar[dd]_{f} \ar[dr]^{\phi}\\
&Z\ar[dl] \\ 
Y }$$
such that 
\begin{itemize}
\item
$\kappa(F,K_{F}+\Delta_{F})={\rm dim}Z-{\rm dim}Y$, 
\item
$\kappa_{\sigma}(G,K_{G}+\Delta_{G})=0$, where $G$ is a sufficiently general fiber of $\phi$, and 
\item
any lc center of $(X,\Delta)$ dominates $Z$. 
\end{itemize}
In this step, we show that we can assume that $K_{X}+\Delta$ is semi-ample over $Z$. 

By the second condition and results in \cite{gongyo1}, the pair $(G,\Delta_{G})$ has a good minimal model. 
By the third condition and Theorem \ref{thmmmp}, there is a good minimal model $(X',\Delta')$ of $(X,\Delta)$ over $Z$. 
We denote the natural morphism $X'\to Y$ by $f'$, and let $F'$ be a sufficiently general fiber of $f'$. 
We define $K_{F'}+\Delta_{F'}$ by $K_{F'}+\Delta_{F'}=(K_{X'}+\Delta')|_{F'}$. 
By the negativity lemma, we obtain
\begin{equation*}
\begin{split}
\kappa(X,K_{X}+\Delta-f^{*}K_{Y}+f^{*}M)&=\kappa(X', K_{X'}+\Delta'-f'^{*}K_{Y}+f'^{*}M)\quad {\rm and}\\
\kappa(F,K_{F}+\Delta_{F})&=\kappa(F', K_{F'}+\Delta_{F'}),
\end{split}
\end{equation*} 
and that $K_{X}+\Delta-f^{*}K_{Y}+f^{*}M$ is abundant if and only if $K_{X'}+\Delta'-f'^{*}K_{Y}+f'^{*}M$ is abundant. 
Furthermore, by construction, every lc center of $(X',\Delta')$ dominates $Z$ and the restriction of $K_{X'}+\Delta'$ to a sufficiently general fiber of the morphism $X'\to Z$ is $\mathbb{Q}$-linearly trivial. 
From these facts, we may replace $(X,\Delta)$ by $(X',\Delta')$, and  we can assume that $K_{X}+\Delta$ is semi-ample over $Z$. 
\end{step}

\begin{step}\label{step4}
With this step we complete the proof. 

Let $\psi\colon X\to X''$ be the contraction over $Z$ induced by $K_{X}+\Delta$. 
There is a $\mathbb{Q}$-divisor $D$ on $X''$ such that $K_{X}+\Delta \sim_{\mathbb{Q}}\psi^{*}D$. 
From the assumption that $\kappa_{\sigma}(G,K_{G}+\Delta_{G})=0$, we have $K_{G}+\Delta_{G}\sim_{\mathbb{Q}}0$, hence we see that $\psi(G)$ is a point. 
Since $G$ is a sufficiently general fiber of $\phi$, the natural morphism $X''\to Z$ is birational. 
Since all lc centers of $(X,\Delta)$ dominate $Z$, all lc centers of $(X,\Delta)$ dominate $X''$. 
By Theorem \ref{thmcbf}, we can find a $\mathbb{Q}$-divisor $\Delta''$ on $X''$ such that $(X'',\Delta'')$ is klt and $D\sim_{\mathbb{Q}}K_{X''}+\Delta''$. 
If we denote the natural morphism $X''\to Y$ by $f''$, then we have
$$\kappa(X,K_{X}+\Delta-f^{*}K_{Y}+f^{*}M)=\kappa(X'',K_{X''}+\Delta''-f''^{*}K_{Y}+f''^{*}M)$$ 
and $K_{X}+\Delta-f^{*}K_{Y}+f^{*}M$ is abundant if and only if $K_{X''}+\Delta''-f''^{*}K_{Y}+f''^{*}M$ is abundant. 
Let $F''$ be a sufficiently general fiber of $f''$. 
Then $\psi\colon X\to X''$ induces a morphism $\psi_{F}\colon F\to F''$, and $\psi_{F}^{*}((K_{X''}+\Delta'')|_{F''})\sim_{\mathbb{Q}}K_{F}+\Delta_{F}$. 
So we have 
\begin{equation*}
\begin{split}
\kappa(F'',(K_{X''}+\Delta'')|_{F''})&=\kappa(F,K_{F}+\Delta_{F})={\rm dim}Z-{\rm dim}Y\\
&={\rm dim}X''-{\rm dim}Y={\rm dim}F''.
\end{split}
\end{equation*} 
From these facts, by replacing $(X,\Delta)$ with $(X'',\Delta'')$, we can assume that $(X,\Delta)$ is klt and $\kappa(F,K_{F}+\Delta_{F})={\rm dim}X-{\rm dim}Y={\rm dim}F$. 
\end{step}
So we complete the proof. 
\end{proof}

From now on, we prove Theorem \ref{thmmain}, Theorem \ref{thmlogiitaka} and Corollary \ref{coriitaka}. 

\begin{proof}[Proof of Theorem \ref{thmmain}]
Let $f \colon X\to Y$, $\Delta$, $F$, $K_{F}+\Delta_{F}$ and $M$ be as in Theorem \ref{thmmain}. 
By Lemma \ref{lemkltbig}, we may assume that $(X,\Delta)$ is klt and $K_{F}+\Delta_{F}$ is big. 
In this situation, Theorem \ref{thmmain} (1) immediately follows from Theorem \ref{thmkp}. 
Therefore, we only need to show Theorem \ref{thmmain} (2). 

Assume the existence of a good minimal model or a Mori fiber space for all projective  klt pairs of dimension $\leq n$, where $n={\rm dim}Y-\kappa(Y,M)$. 
We put $D=M-K_{Y}$, and suppose that $D$ is nef and abundant. 
By \cite[Proposition 6-1-3]{kmm-mmp}, there is a projective birational morphism $g\colon Y'\to Y$, a contraction $h\colon Y'\to Z$ and a nef and big $\mathbb{Q}$-divisor $D_{Z}$ on $Z$ such that $g^{*}D \sim_{\mathbb{Q}}h^{*}D_{Z}$. 
By replacing $(X,\Delta)$ with a log smooth model, we may assume that the induced map $f'\colon X\dashrightarrow Y'$ is a morphism. 
Since $D_{Z}$ is nef and big and since $(X,\Delta)$ is klt, we can find $\Delta'\sim_{\mathbb{Q}}f'^{*}h^{*}D_{Z}$ such that $(X,\Delta+\Delta')$ is klt. 
By construction, we have 
\begin{equation*}
\begin{split}
K_{X}+\Delta-f^{*}K_{Y}+f^{*}M=K_{X}+\Delta+f'^{*}g^{*}D=K_{X}+\Delta+f'^{*}h^{*}D_{Z}\sim_{\mathbb{Q}}K_{X}+\Delta+\Delta'.
\end{split}
\end{equation*}
In this way, it is sufficient to prove that $K_{X}+\Delta+\Delta'$ is abundant. 

By Theorem \ref{thmmain} (1), we have $\kappa(X,K_{X}+\Delta+\Delta')\geq0$. 
So we can construct the Iitaka fibration $X\dashrightarrow V$ of $K_{X}+\Delta+\Delta'$. 
By replacing $(X,\Delta+\Delta')$ with a log smooth model, we may assume that the map $X\dashrightarrow V$ is a morphism.  
Let $G$ be a sufficiently general fiber of the Iitaka fibration, and we set $K_{G}+\Delta_{G}+\Delta'_{G}=(K_{X}+\Delta+\Delta')|_{G}$. 
Then
\begin{equation*}
\begin{split}
{\rm dim}G={\rm dim}X-\kappa(X,K_{X}+\Delta+\Delta')\leq {\rm dim}Y-\kappa(Y,M),
\end{split}
\end{equation*}
where the second inequality follows from Theorem \ref{thmmain} (1) and bigness of $K_{F}+\Delta_{F}$. 
Since $(G,\Delta_{G}+\Delta'_{G})$ is klt, it has a good minimal model. 
By \cite[Theorem 4.3]{gongyolehmann}, we have $\kappa_{\sigma}(G, K_{G}+\Delta_{G}+\Delta'_{G})=\kappa(G, K_{G}+\Delta_{G}+\Delta'_{G})=0.$
By \cite[V, 2.7 (9) Proposition]{nakayama}, we obtain 
$$\kappa_{\sigma}(X, K_{X}+\Delta+\Delta')\leq \kappa_{\sigma}(G, K_{G}+\Delta_{G}+\Delta'_{G})+{\rm dim}V=\kappa(X,K_{X}+\Delta+\Delta').$$
Therefore, we see that $K_{X}+\Delta+\Delta'$ is abundant. 
So we are done. 
\end{proof}

\begin{proof}[Proof of Theorem \ref{thmlogiitaka}]
We may assume $\kappa(Y,K_{Y}+\Delta_{Y})\geq 0$ and $\kappa(F,K_{F}+\Delta_{F})\geq 0$. 
We prove the theorem using Theorem \ref{thmmain} (1). 
The strategy is the same as the proof of \cite[Proposition 9.12]{kp}. 
We freely use notations in Conjecture \ref{conjiitaka}. 

First, we apply \cite[Lemma 9.10]{kp} to the morphism $(X,\Delta_{X})\to Y$. 
There is a log resolution $\varphi\colon X'\to X$, which is an isomorphim over $f^{-1}(U)$ for some open subset $U\subset Y$, such that if we put $\Delta_{X'}$ as the support of $\varphi^{*}\Delta_{X}$ and if we put $\Delta_{X'}^{h}$ is the horizontal part of $\Delta_{X'}$ with respect to the morphism $f\circ \varphi\colon X'\to Y$, then 
\begin{itemize}
\item[($*$)]
for any prime divisor $P$ on $Y$, the pair $(X',{\rm Supp}(\Delta_{X'}^{h}+\varphi^{*}f^{*}P))$ is log smooth over a neighborhood of the generic point of $P$. 
\end{itemize} 
Since $\varphi_{*}(K_{X'}+\Delta_{X'})=K_{X}+\Delta_{X}$, we have $\kappa(X',K_{X'}+\Delta_{X'})\leq \kappa(X,K_{X}+\Delta_{X})$. 
By construction of $\varphi$, any sufficiently general fiber $F'$ of $f\circ \varphi$ is the same as $F$. 
Therefore, the divisor $K_{F'}+\Delta_{X'}|_{F'}$ is abundant. 
Furthermore, $\Delta_{X'}$ is reduced and simple normal crossing, and we have ${\rm Supp}\varphi^{*}f^{*}\Delta_{Y}\subset {\rm Supp}\Delta_{X'}$. 
In this way, we may replace $(X,\Delta_{X})$ by $(X',\Delta_{X'})$, and therefore we may assume condition ($*$) for $(X,\Delta_{X})$. 

Next, we apply \cite[Lemma 9.11]{kp} to the morphism $(X,\Delta_{X})\to Y$. 
We obtain the following diagram 
$$ \xymatrix
{ X \ar[d]_{f} &\overline{X} \ar[l]_{\psi} \ar[d]^{\overline{f}}\\
Y&\overline{Y} \ar[l]^{\tau}
}
$$
such that 
\begin{itemize}
\item
$\overline{X}$ and $\overline{Y}$ are smooth projective varieties, $\psi$ is a generically finite surjective morphism, $\tau$ is a finite surjective morphism, and $\overline{f}\colon \overline{X}\to \overline{Y}$ is a contraction, 
\item 
$\overline{f}$ agrees with the pullback of $f$ via an \'etale morphism over an open subset of $Y$, and 
\item
there is a reduced divisor $\Delta_{\overline{X}}$, which is a simple normal crossing divisor and agrees with the pullback of $\Delta_{X}$ over an open subset of $Y$, such that we have 
$$\kappa(\overline{X}, K_{\overline{X}}+\Delta_{\overline{X}}-\overline{f}^{*}K_{\overline{Y}}+\overline{f}^{*}\tau^{*}(K_{Y}+\Delta_{Y}))\leq \kappa(X,K_{X}+\Delta_{X}).$$
\end{itemize}
Let $\overline{F}$ be a sufficiently general fiber of $\overline{f}$, and put $\Delta_{\overline{F}}=\Delta_{\overline{X}}|_{\overline{F}}$. 
By the second condition and definition of $\Delta_{\overline{F}}$, we have 
$$\kappa(\overline{F}, K_{\overline{F}}+\Delta_{\overline{F}})=\kappa(F,K_{F}+\Delta_{F})\quad {\rm and} \quad \kappa_{\sigma}(\overline{F}, K_{\overline{F}}+\Delta_{\overline{F}})=\kappa_{\sigma}(F,K_{F}+\Delta_{F}).$$
So $K_{\overline{F}}+\Delta_{\overline{F}}$ is abundant. 
Since $\kappa(Y,K_{Y}+\Delta_{Y})\geq 0$ and $\kappa(F,K_{F}+\Delta_{F})\geq 0$, we can apply Theorem \ref{thmmain} (1) to the morphism $\overline{f}\colon (\overline{X},\overline{\Delta})\to \overline{Y}$ and $\tau^{*}(K_{Y}+\Delta_{Y})$. 
With the third condition, we obtain 
\begin{equation*}
\begin{split}
\kappa(X,K_{X}+\Delta_{X})&\geq \kappa(\overline{X}, K_{\overline{X}}+\Delta_{\overline{X}}-\overline{f}^{*}K_{\overline{Y}}+\overline{f}^{*}\tau^{*}(K_{Y}+\Delta_{Y}))\\
&\geq\kappa(\overline{F}, K_{\overline{F}}+\Delta_{\overline{F}})+\kappa(\overline{Y}, \tau^{*}(K_{Y}+\Delta_{Y}))\\
&=\kappa(F,K_{F}+\Delta_{F})+\kappa(Y, K_{Y}+\Delta_{Y}). 
\end{split}
\end{equation*}
In this way, we see that Conjecture \ref{conjiitaka} holds for the morphism $X\to Y$ and the lc pairs $(X,\Delta_{X})$ and $(Y,\Delta_{Y})$. 
\end{proof}

\begin{proof}[Proof of Corollary \ref{coriitaka}]
We freely use notations as in Conjecture \ref{conjiitaka}. 
We may assume $\kappa(F,K_{F}+\Delta_{F})\geq0$ and $\kappa(Y,K_{Y}+\Delta_{Y})\geq0$. 
When ${\rm dim}X-{\rm dim}Y\leq 3$, Conjecture \ref{conjiitaka} follows from Theorem \ref{thmlogiitaka}. 
We assume ${\rm dim}X\leq5$. 
If ${\rm dim}Y\geq 2$, then ${\rm dim}X-{\rm dim}Y\leq 3$ and so Conjecture \ref{conjiitaka} holds in this case.
From now on, we assume ${\rm dim}Y=1$. 
We can assume $\kappa(F,K_{F}+\Delta_{F})=0$ because otherwise $(F,\Delta_{F})$ has a good minimal model (see \cite[Lemma 3.8]{birkar1} and \cite[Theorem 3.3]{fujino-lcring}). 
We can also assume $\kappa(Y,K_{Y}+\Delta_{Y})=0$ because otherwise Corollary \ref{coriitaka} follows from \cite[Theorem 1.9]{fujino-noteiitaka}. 
Then $Y$ is an elliptic curve and $\Delta_{Y}=0$, or $Y=\mathbb{P}^{1}$ and $\Delta_{Y}\sim -K_{Y}$. 
It is sufficient to prove the inequality $\kappa(X,K_{X}+\Delta_{X})\geq0$. 
When $Y$ is an elliptic curve and $\Delta_{Y}=0$, the inequality 
 follows from \cite[Lemma 2.9]{wang}. 
So assume $Y=\mathbb{P}^{1}$ and $\Delta_{Y}\sim -K_{Y}$. 

By taking a suitable coordinate of $\mathbb{P}^{1}$, we may assume $\Delta_{Y}=P_{0}+P_{\infty}$, where $P_{0}$ and $P_{\infty}$ are prime divisors corresponding to $0$ and $\infty$, respectively. 
Let $n$ be a common multiple of all coefficients of $f^{*}\Delta_{Y}$, and we define $g\colon Y\to Y$ by $g(x)=x^{n}$. 
It is clear that $g$ is \'etale over $Y\setminus\{0,\infty \}$ and $K_{Y}+\Delta_{Y}=g^{*}(K_{Y}+\Delta_{Y})$. 
Let $X'$ be the normalization of the main component of $X\times_{Y}Y$, and we denote natural morphisms $X'\to Y$ and $X'\to X$ by $f'$ and $h$. 
We define $\Delta'$ by $K_{X'}+\Delta'=h^{*}(K_{X}+\Delta_{X})$. 
Since ${\rm Supp}f^{*}\Delta_{Y}\subset {\rm Supp}\Delta_{X}$ and $g$ ramifies only over ${\rm Supp}\Delta_{Y}$, we see that $h$ does not ramify over $X\setminus{\rm Supp}\Delta_{X}$. 
So $\Delta'\geq 0$, thus $\Delta'$ is reduced and $(X',\Delta')$ is lc. 
Moreover, argument in \cite[Proposition 7.23]{kollar-mori} shows that $f'^{*}\Delta_{Y}$ is reduced, which shows $f'^{*}\Delta_{Y}\leq \Delta'$. 
By construction, it is easy to check that we only have to prove $\kappa(X',K_{X'}+\Delta')\geq0$. 
In this way, replacing $(X,\Delta_{X})$ and $f$ by $(X',\Delta')$ and $f'$ respectively, we may assume  $f^{*}\Delta_{Y}\leq \Delta_{X}$. 
We note that after replacing $(X,\Delta_{X})$ and $f$ the pair $(X,\Delta_{X})$ is not necessarily log smooth, but it is harmless because we only use that $(X,\Delta_{X})$ is lc. 

We set $D=\Delta_{X}-f^{*}\Delta_{Y}\geq 0$. 
Then $(X,D)$ is lc and $K_{X}+\Delta_{X}=K_{X}+D-f^{*}K_{Y}$. 
Since $\kappa(F,K_{F}+\Delta_{F})=0$ and $Y=\mathbb{P}^{1}$, we can write $f_{*}\mathcal{O}_{X}(m(K_{X}+\Delta_{X}))\simeq \mathcal{O}_{Y}(a)$ for some $m>0$ and $a\in \mathbb{Z}$. 
By weak positivity theorem \cite[Theorem 1.1]{fujino-noteiitaka}, we have $a\geq 0$, which implies $\kappa(X,K_{X}+\Delta_{X})\geq0$ because $Y=\mathbb{P}^{1}$. 
So we are done. 
\end{proof}


\end{document}